\renewcommand{\pmod}[1]{\,(\textup{mod}\,#1)}
\numberwithin{equation}{section}
 \theoremstyle{plain}
\newtheorem{theorem}{Theorem}[section]
\newtheorem{lemma}[theorem]{Lemma}
\newtheorem{corollary}[theorem]{Corollary}
\newcommand{\wt}{\widetilde{\sigma}}
\newcommand{\mP}{\mathcal{P}}
\newcommand{\mQ}{\mathcal{Q}}
\newcommand{\mE}{\mathcal{E}}
\begin{document}

\title{Eisenstein series associated with $\Gamma_0(2)$}
\author{Heekyoung Hahn}
\address{Department of Mathematics, University of Rochester, Rochester, NY 14627 USA}
\email{hahn@math.rochester.edu}
\begin{abstract}
In this paper, we define the normalized Eisenstein series $\mP$, $e$, and $\mQ$ associated with $\Gamma_0(2),$ and derive three differntial equations satisfied by them from some trigonometirc identities. By using these three formulas, we define a differential equation depending on the weights of modular forms on $\Gamma_0(2)$ and then construct its modular solutions by using orthogonal polynomials and Gaussian hypergeometric series. We also construct a certain class of infinite series connected with the triangular numbers. Finally, we derive a combinatorial identity from a formula involving the triangular numbers.
\end{abstract}
\subjclass[2000]{Primary: 11F11; Secondary: 11C08, 11N64, 11A25}
\maketitle

\section{Introduction}
In his notation \cite{Ramanujan-arith}, Ramanujan's three primary
Eisenstein series are defined for $|q|<1$ by
{\allowdisplaybreaks\begin{align}
P&:=P(q)=1-24\Phi_{0,1}(q)=1-24\sum_{n=1}^{\infty}\frac{nq^n}{1-q^n},\label{P}\\
Q&:=Q(q)=1+240\Phi_{0,3}(q)=1+240\sum_{n=1}^{\infty}\frac{n^3q^n}{1-q^n},\label{Q}\\
R&:=R(q)=1-504\Phi_{0,5}(q)=1-504\sum_{n=1}^{\infty}\frac{n^5q^n}{1-q^n},\label{R}
\end{align}}
where
$$\Phi_{r,s}:=\Phi_{r,s}(q)=\sum_{m=1}^{\infty}
\sum_{n=1}^{\infty} m^r n^s q^{mn}$$ for integers $r, s \geq 0$.

In more contemporary notation, the normalized Eisenstein series on $SL_2(\mathbb{Z})$ are defined, for each even integer $k \geq 4$, by
$$E_k(z)=\frac{1}{2}\sum (cz+d)^{-k},$$
where the summation is over all coprime pairs of integers $c$ and
$d$, and $\text{Im }z > 0$. Then it is known that $E_k(z)$ has the
Fourier expansion \cite{Koblitz}
\begin{equation}
E_k:=E_k(z)=1-\frac{2k}{B_k} \sum_{n=1}^{\infty}\sigma_{k-1}(n)q^n,
\end{equation}
where $q=e^{2 \pi i z}$, $B_k$ is the $k$th Bernoulli number and
\begin{equation}\label{sigma}
\sigma_k(n):=\sum_{d|n}d^k.
\end{equation} As usual, we set
$\sigma_1(n)=\sigma(n)$ and $\sigma_k(n)=0$ if $n \notin
\mathbb{N}$. Note that $E_4(z)=Q(q)$ and $E_6(z)=R(q)$ are holomorphic modular forms on $SL_2(\mathbb{Z})$ of weights $4$ and $6$, respectively \cite[p. 109]{Koblitz}. It is well--known that $E_2(z)=P(q)$ is not a modular form of weight $2$ \cite[p. 12]{Koblitz}, called a {\it quasi--modular} form. The Eisenstein series \eqref{P}--\eqref{R} satisfy the differential equations \cite[eq. (30)]{Ramanujan-arith}, \cite[p. 142]{Ramanujan-collect}
{\allowdisplaybreaks \begin{align}
q\frac{dP}{dq}&=\frac{P^2-Q}{12}, \label{dP}\\
q\frac{dQ}{dq}&=\frac{PQ-R}{3}, \label{dQ}\\
q\frac{dR}{dq}&=\frac{PR-Q^2}{2}. \label{dR}
\end{align}}
By analogy with Ramanujan's functions $\Phi_{r,s}$, V. Ramamani, in her paper \cite[pp. 279--286]{Ramamani-paper}, defined $\Phi_{r,s}$ for integers $r,s \geq 0$, by
\begin{equation}\label{Psi}
\Psi_{r,s}:=\Psi_{r,s}(q)=\sum_{m=1}^{\infty}\sum_{n=1}^{\infty}(-1)^{n-1}m^r
n^s q^{mn}.
\end{equation}
In contrast to $\Phi_{r,s}$, the functions $\Psi_{r,s}$ are not symmetric in $r$ and $s$. In connection with $\Psi_{r,s}$, let us define three functions $\mP$, $e$, and $\mQ$ by
{\allowdisplaybreaks\begin{align}
\mP&:=\mP(q)=1+8\Psi_{0,1}(q)=1+8\sum_{n=1}^{\infty}\frac{(-1)^{n-1}nq^n}{1-q^n}, \label{mp}\\
e&:=e(q)=1+24\Psi_{1,0}(q)=1+24\sum_{n=1}^{\infty}\frac{nq^n}{1+q^n},\label{e}\\
\mQ&:=\mQ(q)=1-16\Psi_{0,3}(q)=1-16\sum_{n=1}^{\infty}\frac{(-1)^{n-1}n^3q^n}{1-q^n}.\label{mq}
\end{align}}
Using the theory of the elliptic functions, Ramamani \cite{Ramamani-thesis} proved that when $r+s$ is odd, $\Psi_{r,s}$ can be expressed as a polynomial in $\mP$, $e$, and $\mQ$. We remark that for $r+s$ odd, the function $\Psi_{r,s}$ is
related to the normalized Eisenstein series on $\Gamma_0(2)$, where the modular subgroup $\Gamma_0(2)$ is defined by
\begin{equation}\label{gamma2}
\Gamma_0(2):=\left\{\gamma\in SL_2(\mathbb{Z})\,\Big|\, \gamma\equiv \begin{pmatrix} * & * \\0 & *\end{pmatrix} \pmod{2}\right\}.
\end{equation}
The normalized Eisenstein series
associated with $\Gamma_0(2)$ are defined, for even integer $k
\geq 4$, by
\begin{equation}\label{eisen}
\mE_k:=\mE_k(z)=1-\frac{2k}{(1-2^k)B_k} \sum_{n=1}^{\infty}
\frac{(-1)^{n-1} n^{k-1}q^n}{1-q^n}.
\end{equation}
Then the series $\mE_k(z)$ are modular forms of weight $k$ on $\Gamma_0(2)$ which vanish at the cusp zero \cite[Theorem 1.1]{Boylan}. It is clear that $\mE_4(z)=\mQ(q)$ is the relevant modular form of weight $4$ on $\Gamma_0(2)$. When $k=2$, it turns out that $\mE_2(z)=\mP(q)$ is not a modular form on this group (see \eqref{rel-mE2}), but it plays important roles in the theory of modular forms of level $2$. Note that the function $e(q)$ is indeed the modular form of weight $2$ on $\Gamma_0(2)$ \cite[Lemma 3.3]{BBY}.

In Section 2, we derive relations for $\Psi_{r,s}$, for odd $r+s$, from trigonometric identities \cite[eqs. (17), (18)]{Ramanujan-arith} and then, in the same manner as \eqref{dP}--\eqref{dR} are proved, we obtain the differential equations
{\allowdisplaybreaks\begin{align}
q\frac{d\mP}{dq}&=\frac{\mP^2-\mQ}{4}, \label{dmp}\\
q\frac{de}{dq}&=\frac{e\mP-\mQ}{2}, \label{de}\\
q\frac{d\mQ}{dq}&=\mP\mQ-e\mQ. \label{dmq}
\end{align}}The proofs will be given in Theorem \ref{TDmpmq} and Theorem \ref{TDe}. At the end of Section 2, we mention an alternative proof of these formulas using the theory of modular forms. In Section 3, by using \eqref{dmp}--\eqref{dmq}, we define a differential equation depending on the weight $k$ of modular forms on $\Gamma_0(2)$ and then construct its modular solutions by using orthogonal polynomials. We also find the hypergeometric structure for the solutions of this differential equation. In section 4, we construct a certain class of infinite series connected with the triangular numbers. Finally, we derive a combinatorial interpretation from one of formulas which we construct in Section 4.

\section{Differential equations for $\mP$, $e$, and $\mQ$}

Ramamani \cite{Ramamani-thesis} proved that for odd $s \geq 3$, $\Psi_{0,s}$ can be expressed as a polynomial in $e$ and $\mQ$ by using the theory of elliptic functions. We can observe this by comparing the coefficients of $\theta^n$ in the trigonometric identity \cite[eq. (18)]{Ramanujan-arith}
{\allowdisplaybreaks\begin{align} \Big( \frac{1}{8} \cot
^2{\frac{\theta}{2}}+\frac{1}{12} \Big)^{2}
 &+\frac{1}{12} \sum_{n=1}^{\infty} \frac{n^3q^n}{1-q^n}(5+\cos {n \theta} )\label{Tri}\\
&=\Big(\frac{1}{8}\cot ^2{\frac{\theta}{2}}+\frac{1}{12}+\sum_{n=1}^{\infty} \frac{nq^n}{1-q^n}(1-\cos {n \theta} ) \Big)^{2}.\nonumber
\end{align}}
After replacing $\theta$ by $\pi+\theta$ in \eqref{Tri}, let us expand $\tan \theta$ and $\cos n \theta$ in their Taylor series about $0$ for each $n=1,2, \dots$. We therefore find that
{\allowdisplaybreaks
\begin{align}
\frac{1}{144}+\frac{\theta^2}{192}&+\frac{17\theta^4}{9216}+\cdots
+\frac{1}{12}\Bigg\{\Big(4\frac{q}{1-q}+6\frac{2^3q^2}{1-q^2}+4\frac{3^3q^3}{1-q^3}+\cdots\Big)\nonumber\\
&\hspace{1.5in}+\frac{1}{2!}\Big(\frac{q}{1-q}-\frac{2^5q^2}{1-q^2}+\frac{3^5q^3}{1-q^3}-\cdots \Big)\theta^2\nonumber\\
&\hspace{1.5in}-\frac{1}{4!}\Big(\frac{q}{1-q}-\frac{2^7q^2}{1-q^2}+\frac{3^7q^3}{1-q^3}-\cdots
\Big)\theta^4+\cdots \Bigg\}\nonumber\\
&=\Bigg\{\frac{1}{12}+2\Big(\frac{q}{1-q}+\frac{3q^3}{1-q^3}+\frac{5q^5}{1-q^5}+\cdots\Big)\label{bothsides}\\
&\hspace{0.8in}+\frac{1}{2!}\Big(\frac{1}{16}-\frac{q}{1-q}+\frac{2^3q^2}{1-q^2}-\frac{3^3q^3}{1-q^3}+\cdots\Big)
\theta^2 \nonumber\\
&\hspace{0.8in}+\frac{1}{4!}\Big(\frac{1}{8}+\frac{q}{1-q}-\frac{2^5q^2}{1-q^2}+\frac{3^5q^3}{1-q^3}-\cdots
\Big) \theta^4+\cdots \Bigg\}^2.\nonumber
\end{align}}
Before going further, we need to introduce an alternative
representation for $e(q)$. By the elementary fact
$$\frac{x}{1+x}=\frac{x}{1-x}-\frac{2x^2}{1-x^2},$$ we find that
{\allowdisplaybreaks\begin{align}
e(q)&=1+24\sum_{n=1}^{\infty}\frac{nq^n}{1+q^n}\nonumber\\&=1+24\sum_{n=1}^{\infty}\frac{nq^n}{1-q^n}-24\sum_{n=1}^{\infty}\frac{2nq^{2n}}{1-q^{2n}}\nonumber\\&=1+24\sum_{n=1}^{\infty}\frac{(2n-1)q^{2n-1}}{1-q^{2n-1}}\label{alt-e}.
\end{align}}
So we can rewrite \eqref{bothsides} as
{\allowdisplaybreaks\begin{align}
\frac{1}{144}&+\frac{1}{6}\Bigg(2\frac{q}{1-q}+3\frac{2^3q^2}{1-q^2}+2\frac{3^3q^3}{1-q^3}+\cdots\Bigg)
+\Bigg(\frac{1}{192}+\frac{\Psi_{0,5}}{12 \cdot2!}\Bigg)\theta^2\nonumber\\
&\hspace{.4in}+\Bigg(\frac{17}{9216}-\frac{\Psi_{0,7}}{12\cdot4!}\Bigg)\theta^4
+\Bigg(\frac{31}{69120}+\frac{\Psi_{0,9}}{12\cdot6!}\Bigg)\theta^6+\cdots\nonumber\\
&=\Bigg\{\frac{1}{12}+\frac{e-1}{12}+\frac{\mQ}{16\cdot2!}\theta^2+\frac{1+8\Psi_{0,5}}{8\cdot4!}\theta^4+\cdots
\Bigg\}^2\nonumber\\
&=\frac{e^2}{144}+\frac{e\mQ}{192}\theta^2+\Bigg(\frac{\mQ^2}{1024}
+\frac{e(1+8\Psi_{0,5})}{1152}\Bigg)\theta^4\label{finalsides}\\
&\hspace{.4in}+\Bigg(\frac{e(17-32\Psi_{0,7})}{139240}+\frac{\mQ(1+8\Psi_{0,5})}{3072}\Bigg)\theta^6+\cdots.\nonumber
\end{align}}
So if we compare the coefficients of $\theta^2$ on both sides of \eqref{finalsides}, then we have 
\begin{equation}\label{psi05}
1+8\Psi_{0,5}=e\mQ.\end{equation} 
Similarly, equating coefficients of $\theta^4$ and using \eqref{psi05}, we obtain the identity
$$17-32\Psi_{0,7}=9\mQ^2+8e(1+8\Psi_{0,5})=9\mQ^2+8e^2\mQ.$$
Successively comparing the coefficients of $\theta^n,~n=6,10,12,
\dots$ on both sides, we easily obtain the following theorem.

\begin{theorem}\label{mainlemma}
For even  integer $k \geq 4$,
\begin{equation}\label{generaleisen}
\mathcal{E}_k=\sum_{\substack{2m+4n=k\\m \geq 0, n \geq 1}}
\alpha_{m,n}e^m \mQ^n,
\end{equation}
where $\alpha_{m,n}$ are constants.
\end{theorem}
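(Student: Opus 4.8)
The plan is to read off \eqref{generaleisen} inductively from the power-series identity \eqref{finalsides}, and then to promote the resulting expression to the stated graded shape using that $e$, $\mQ=\mE_4$, and $\mE_k$ are modular forms on $\Gamma_0(2)$. Write the bracket that is squared on the right of \eqref{finalsides} as $\sum_{i\ge 0}a_i\theta^{2i}$. By \eqref{alt-e} and \eqref{mq} one reads off $a_0=e/12$ and $a_1=\mQ/32$, while for $i\ge 2$ the coefficient $a_i$ is an explicit rational multiple of $1+8\Psi_{0,2i+1}$. Squaring, the coefficient of $\theta^{2j}$ on the right is $\sum_{i=0}^{j}a_ia_{j-i}$, whereas on the left the coefficient of $\theta^{2j}$ is a constant plus a constant multiple of $\Psi_{0,2j+3}$, the single highest-index term. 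Since every index occurring on the right is at most $2j+1<2j+3$, equating the two coefficients solves for $\Psi_{0,2j+3}$ in terms of $\Psi_{0,1},\dots,\Psi_{0,2j+1}$ and constants.

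This sets up a strong induction on the $a_i$. One has $a_0=e/12$ and $a_1=\mQ/32$ in $\mathbb{C}[e,\mQ]$ outright, while $a_2\in\mathbb{C}[e,\mQ]$ is exactly the content of \eqref{psi05} (equivalently $\mE_6=e\mQ$). Assuming $a_0,\dots,a_j\in\mathbb{C}[e,\mQ]$, the right-hand coefficient $\sum_{i=0}^{j}a_ia_{j-i}$ is a polynomial in $e$ and $\mQ$; equating it to the left-hand coefficient and solving shows $\Psi_{0,2j+3}$, hence $a_{j+1}$, lies in $\mathbb{C}[e,\mQ]$. Consequently every $\mE_{2i+2}=1+c_{2i+2}\Psi_{0,2i+1}$, with $c_k=-2k/((1-2^k)B_k)$ as in \eqref{eisen}, lies in $\mathbb{C}[e,\mQ]$, proving that $\mE_k$ is some polynomial in $e$ and $\mQ$.

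It remains to upgrade this to a homogeneous expression of weight $k$ in which every monomial is divisible by $\mQ$, i.e.\ $n\ge 1$. For homogeneity, decompose the polynomial $F$ with $\mE_k=F(e,\mQ)$ into its weight-homogeneous parts $F=\sum_w F_w$, assigning $e$ weight $2$ and $\mQ$ weight $4$; each $F_w(e,\mQ)$ is then a modular form of weight $w$ on $\Gamma_0(2)$, since $e\in M_2(\Gamma_0(2))$ \cite{BBY} and $\mQ=\mE_4\in M_4(\Gamma_0(2))$ \cite{Boylan}. Because modular forms of distinct weights are linearly independent, the identity $\mE_k=\sum_w F_w$ with $\mE_k\in M_k(\Gamma_0(2))$ forces $F_w=0$ for $w\ne k$, so $F=F_k$ is homogeneous of weight $k$; thus $\mE_k=\sum_{2m+4n=k}\alpha_{m,n}e^m\mQ^n$ with $m,n\ge 0$. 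Finally, $\mE_k$ vanishes at the cusp zero \cite{Boylan}, as does $\mQ=\mE_4$, whereas $e$ does not (there is no nonzero weight-$2$ form on $\Gamma_0(2)$ vanishing there). Evaluating at that cusp kills the unique $n=0$ term $e^{k/2}$ and forces $\alpha_{k/2,0}=0$, leaving precisely \eqref{generaleisen}.

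The main obstacle is the passage carried out in the last paragraph. The recursion by itself only places $\mE_k$ in $\mathbb{C}[e,\mQ]$ and, taken literally, produces inhomogeneous representatives: already $1+8\Psi_{0,7}$ is \emph{not} a modular form, and only the combination $17-32\Psi_{0,7}$ is. Confirming that these stray constant terms cancel, so that one genuinely lands on a weight-$k$ form divisible by $\mQ$, is where the modular structure is indispensable; purely combinatorially it amounts to a compatibility between the Taylor coefficients of $\tan^2(\theta/2)$ on the left of \eqref{finalsides} and the Bernoulli-number normalizations $c_k$, which I would settle through the graded ring $M_*(\Gamma_0(2))$ rather than by a direct computation.
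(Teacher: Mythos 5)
Your proposal is correct, and its first half --- solving for $\Psi_{0,2j+3}$ by induction from the $\theta^{2j}$-coefficients of \eqref{finalsides}, using that the left side contributes the single new index $2j+3$ while the squared bracket on the right involves only $\Psi_{0,3},\dots,\Psi_{0,2j+1}$ --- is exactly the paper's argument (``successively comparing the coefficients of $\theta^n$''). Where you genuinely diverge is in the second half. The paper stops at the recursion and simply asserts that the resulting expressions have the homogeneous, $\mQ$-divisible shape \eqref{generaleisen}; implicitly this requires checking at each stage that the constant produced by the Taylor expansion of $\tan^2(\theta/2)$ matches the Bernoulli normalization in \eqref{eisen} (e.g.\ that the combination $17-32\Psi_{0,7}$, and not $1+8\Psi_{0,7}$, is what emerges). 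You instead deduce only that $\mE_k\in\mathbb{C}[e,\mQ]$ and then recover the graded shape structurally: the weight-homogeneous pieces $F_w(e,\mQ)$ lie in $M_w(\Gamma_0(2))$, the sum of the $M_w(\Gamma_0(2))$ inside the ring of holomorphic functions is direct, so only the weight-$k$ piece survives; and evaluation at the cusp $0$, where $\mE_k$ and $\mQ$ vanish but $e$ does not (a nonzero weight-$2$ form on $\Gamma_0(2)$ has total zero-order $1/2$ by the valence formula and so cannot vanish at a cusp), kills the lone $n=0$ monomial $e^{k/2}$. This buys a complete proof of precisely the part of the statement the paper leaves unjustified, at the cost of importing the modularity of $e$ and $\mE_k$ and the cusp data from \cite{BBY} and \cite{Boylan}, whereas the paper's route is in principle purely formal. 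One small slip: your claim that $a_i$ is a rational multiple of $1+8\Psi_{0,2i+1}$ for all $i\ge 2$ is false for $i\ge 3$ (for instance $a_3$ is a rational multiple of $17-32\Psi_{0,7}$); you correct this yourself in the final paragraph, and the induction only uses that $a_i$ is an affine function of $\Psi_{0,2i+1}$ with nonzero linear coefficient, so nothing breaks.
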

The first few examples of Theorem \ref{mainlemma} are the
relations contained in the following Table I. {\allowdisplaybreaks
\begin{align}
1-16\Psi_{0,3}=&\mQ,\\
1+8\Psi_{0,5}=&e\mQ, \label{table5}\\
17-32\Psi_{0,7}=&8e^2\mQ+9\mQ^2,\label{table7}\\
31+8\Psi_{0,9}=&4e^3\mQ+27e\mQ^2,\label{table9} \\
691-16\Psi_{0,11}=&16e^4\mQ+486e^2\mQ^2+189\mQ^3,\\
5461+8\Psi_{0,13}=&16e^5\mQ+2016e^3\mQ^2+3429e\mQ^3,\\
929569-64\Psi_{0,15}=&256e^6\mQ+130464e^4\mQ^2+667872e^2\mQ^3
+130977\mQ^4.
\end{align}}
\begin{center}
\textbf{Table I}
\end{center}

Now, we will give a detailed proof of the differential equations \eqref{dmp}--\eqref{dmq}.

\begin{theorem}\label{TDmpmq}
If $\mP$ and $\mQ$ are defined by \eqref{mp} and \eqref{mq}, respectively, then $\mP$ and $\mQ$ satisfy the differential equations \eqref{dmp} and \eqref{dmq}, respectively.
\end{theorem}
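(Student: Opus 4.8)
The plan is to follow Ramanujan's derivation of \eqref{dP}--\eqref{dR}: reduce each equation to an identity among the functions $\Psi_{r,s}$, and extract that identity from a trigonometric expansion. The key observation is that $q\,d/dq$ raises both indices of $\Psi_{r,s}$ by one, for since $\Psi_{r,s}=\sum_{m,n}(-1)^{n-1}m^r n^s q^{mn}$ and $q\frac{d}{dq}q^{mn}=mn\,q^{mn}$, we have $q\frac{d}{dq}\Psi_{r,s}=\Psi_{r+1,s+1}$. Applying this to \eqref{mp} and \eqref{mq} gives
\[ q\frac{d\mP}{dq}=8\Psi_{1,2},\qquad q\frac{d\mQ}{dq}=-16\Psi_{1,4}. \]
Since $r+s$ is odd for both $\Psi_{1,2}$ and $\Psi_{1,4}$, Ramamani's theorem already guarantees that each is a polynomial in $\mP$, $e$, $\mQ$, necessarily weight--homogeneous of weight $r+s+1$ (that is, $4$ and $6$); thus \eqref{dmp} and \eqref{dmq} merely assert specific values of the coefficients of these polynomials.

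First I would make the reduction explicit. Substituting $\mP=1+8\Psi_{0,1}$ and $\mQ=1-16\Psi_{0,3}$ turns \eqref{dmp} into
\[ 2\Psi_{1,2}=\Psi_{0,1}+\Psi_{0,3}+4\Psi_{0,1}^2, \]
and, writing $\mP-e=8\Psi_{0,1}-24\Psi_{1,0}$, turns \eqref{dmq} into
\[ 2\Psi_{1,4}=3\Psi_{1,0}-\Psi_{0,1}+16\Psi_{0,1}\Psi_{0,3}-48\Psi_{1,0}\Psi_{0,3}. \]
Reading off the coefficient of $q^N$ converts these into convolution identities for the signed divisor sums $\beta(N)=\sum_{d\mid N}(-1)^{d-1}d$, $\delta(N)=\sum_{d\mid N}(-1)^{d-1}d^3$, and $\beta'(N)=\sum_{d\mid N}(-1)^{d-1}(N/d)$; for example \eqref{dmp} is equivalent to $4\sum_{a+b=N}\beta(a)\beta(b)=(2N-1)\beta(N)-\delta(N)$ for every $N\ge1$.

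These convolution identities are exactly what the trigonometric identities supply. I would substitute $\theta\mapsto\pi+\theta$ in \eqref{Tri} together with its companion trigonometric identity, expand in powers of $\theta$ as in \eqref{bothsides}--\eqref{finalsides}, and compare coefficients: the squared right--hand sides generate the convolutions $\beta\ast\beta$, $\beta\ast\delta$, and $\beta'\ast\delta$, while the $\cos n\theta$ and $\sin n\theta$ expansions on the left produce the single divisor sums, with the extra power of $n$ (hence the factor $N$) coming from differentiating the trigonometric argument. Matching the coefficient of the appropriate power of $\theta$ then yields the two displayed identities, and with them \eqref{dmp} and \eqref{dmq}, precisely as Table~I was obtained from \eqref{finalsides}. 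The main obstacle will be bookkeeping rather than any conceptual step: one must track the alternating sign $(-1)^{n-1}$ through the substitution $\theta\mapsto\pi+\theta$ and isolate the single power of $\theta$ whose comparison produces the convolution terms instead of reproducing the weight relations of Table~I, and \eqref{dmq} is the more delicate case because its identity entangles the three functions $\beta$, $\beta'$, $\delta$. As an independent check --- and as the alternative proof announced at the end of Section~2 --- one may instead observe that both sides of \eqref{dmp} and of \eqref{dmq} are quasimodular forms on $\Gamma_0(2)$ of weights $4$ and $6$ respectively, so that by the weight--homogeneity above each equation reduces to matching only finitely many Fourier coefficients.
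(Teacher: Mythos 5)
Your proposal is correct and follows essentially the same route as the paper: apply $q\frac{d}{dq}\Psi_{r,s}=\Psi_{r+1,s+1}$ to reduce \eqref{dmp} and \eqref{dmq} to identities for $\Psi_{1,2}$ and $\Psi_{1,4}$, then extract those identities by substituting $\theta\mapsto\pi+\theta$ in Ramanujan's trigonometric identities and comparing the coefficients of $\theta^2$ and $\theta^4$ (the paper uses the companion identity \eqref{Trigo} for the $\Psi_{1,s}$ terms and \eqref{Tri} only for $1+8\Psi_{0,5}=e\mQ$, exactly the division of labor you anticipate), with the quasimodularity argument appearing as the paper's alternative proof in the Remark closing Section~2. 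Your reformulation as divisor-sum convolutions is just the $q^N$-coefficient view of the same identities, so no new idea is needed or missing.
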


\begin{proof}
Recall the identity \cite[eq. (17)]{Ramanujan-arith}
{\allowdisplaybreaks\begin{align}
\Big(\frac{1}{4}\cot{\frac{\theta}{2}}+\sum_{n=1}^{\infty}\frac{q^n}{1-q^n}\sin{n\theta}\Big)^{2}
=&\Big(\frac{1}{4}\cot{\frac{\theta}{2}}\Big)^{2}+\sum_{n=1}^{\infty} \frac{q^n \cos{n\theta}}{(1-q^n)^2}\nonumber \\
 &+\frac{1}{2}\sum_{n=1}^{\infty}\frac{nq^n}{1-q^n}(1-\cos{n\theta}).\label{Trigo}
\end{align}}Replacing $\theta$ by $\pi +\theta$ in \eqref{Trigo} and expanding $\sin n\theta$ and $\cos n\theta$ in Maclaurin series, we obtain the Taylor series expansion at $0$,
{\allowdisplaybreaks\begin{align} \Big( \frac{1}{8}\mP
\theta+&\frac{1}{16}\mQ \frac{ \theta ^3}{3!} + \Big( \frac{1}{8}
+\Psi_{0,5} \Big)\frac{\theta ^5}{5!}
+\Big( \frac{17}{32} -\Psi_{0,7} \Big)\frac{\theta ^7}{7!}+ \cdot \cdot \cdot \Big)^{2} \nonumber \\
=&\frac{1}{2} \sum_{n=1}^{\infty}
\frac{nq^n}{1-q^n}-\sum_{n=1}^{\infty}
(-1)^{n-1}\frac{q^n}{(1-q^n)^2} +\frac{1}{2}\Psi_{0,1} \label{ETrigo}\\
&+ \Big( \frac{1}{32}+\Psi_{1,2}-\frac{1}{2}\Psi_{0,3} \Big)
\frac{\theta ^2}{2!}
+ \Big( \frac{1}{16}-\Psi_{1,4}+\frac{1}{2}\Psi_{0,5} \Big)
\frac{\theta ^4}{4!}+\cdots . \nonumber
\end{align}}
If we compare the coefficient of $\theta^2$ on both sides of
\eqref{ETrigo}, then we deduce that
\begin{equation}
\mP^2=1+32\Psi_{1,2}-16\Psi_{0,3}=\mQ+32\Psi_{1,2}.\label{e-dmp}
\end{equation}
By the definition of $\Psi_{0,s}$, $s \geq 1$, it is clear that
\begin{equation}\label{dpsirs}
q\frac{d\Psi_{0,s}}{dq}=\Psi_{1,s+1} \quad \text{and} \quad
q\frac{d\Psi_{r,0}}{dq}=\Psi_{r+1,1}.
\end{equation}
So by \eqref{e-dmp}, we obtain
\begin{equation}
q\frac{d\mP}{dq}=8q\frac{d\Psi_{0,1}}{dq}=8\Psi_{1,2}=\frac{\mP^2-\mQ}{4},
\end{equation}
which is the desired identity \eqref{dmp}.

Similarly, by comparing the coefficients of $\theta^4$ from
\eqref{ETrigo}, we can find that
\begin{equation}\label{Psi14}
\frac{\mP\mQ}{16}=\frac{1}{16}-\Psi_{1,4}+\frac{1}{2}\Psi_{0,5}.
\end{equation}
Hence, from \eqref{Psi14}, we derive that
$$q\frac{d\mQ}{dq}=-16q\frac{d\Psi_{0,3}}{dq}=-16\Psi_{1,4}=\mP\mQ-(1+8\Psi_{0,5})=\mP\mQ-e\mQ,$$
from \eqref{table5}.
\end{proof}

To find a differential equation for $e(q)$, we need another trigonometric identity proved by Ramamani \cite[eq. (1.5)]{Ramamani-paper}.

\begin{lemma}
{\allowdisplaybreaks\begin{align} \Big( \frac{1}{4}\cot
\frac{\theta}{2}&+\sum_{n=1}^{\infty}\frac{q^n}{1-q^n}\sin n\theta
\Big)^{3}=\Big( \frac{\cot \theta/2}{4}
\Big)^{3}-\frac{3}{2}\sum_{n=1}^{\infty}\frac{q^n}{(1-q^n)^3}\sin
n \theta \nonumber \\
&+\frac{3}{4}\sum_{n=1}^{\infty}\frac{(n+1)q^n}{(1-q^n)^2}\sin n
\theta-\frac{1}{16}\sum_{n=1}^{\infty}\frac{(2n^2+1)q^n}{1-q^n}\sin{n\theta} \label{RamamaniTri}\\
&+\frac{3}{8}\cot\frac{\theta}{2}\sum_{n=1}^{\infty}\frac{nq^n}{1-q^n}+
\frac{3}{2}\Big( \sum_{n=1}^{\infty}\frac{q^n}{1-q^n}\sin n\theta
\Big) \Big(\sum_{n=1}^{\infty} \frac{nq^n}{1-q^n} \Big). \nonumber
\end{align}}
\end{lemma}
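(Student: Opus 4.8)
The plan is to deduce the cubic identity \eqref{RamamaniTri} from the quadratic identity \eqref{Trigo}, which is already in hand, by writing $S^3=S\cdot S^2$. Setting $C=\tfrac14\cot\tfrac\theta2$ and $T=\sum_{n\ge1}\tfrac{q^n}{1-q^n}\sin n\theta$, so that the inner expression is $S=C+T$, I would substitute for $S^2$ the right-hand side of \eqref{Trigo}, namely $S^2=C^2+A+B$ with $A=\sum_{n\ge1}\tfrac{q^n\cos n\theta}{(1-q^n)^2}$ and $B=\tfrac12\sum_{n\ge1}\tfrac{nq^n}{1-q^n}(1-\cos n\theta)$. Multiplying out gives the six pieces $C^3+CA+CB+TC^2+TA+TB$, and the task is to match their sum against the right-hand side of \eqref{RamamaniTri}, whose leading term $C^3$ already agrees.

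The products that involve $C$ are handled by two elementary Dirichlet-kernel identities,
\[
\cot\tfrac\theta2\,\sin n\theta=1+2\sum_{j=1}^{n-1}\cos j\theta+\cos n\theta,\qquad
\cot\tfrac\theta2\,\cos n\theta=\cot\tfrac\theta2-2\sum_{j=1}^{n}\sin j\theta+\sin n\theta,
\]
together with the product-to-sum formulas for $\sin a\cos b$ and $\sin a\sin b$. Iterating the first identity (to reduce $TC^2$ through $C\cdot T$ and then a second application of $C$) converts each cotangent-weighted term into a simple pole proportional to $\cot\tfrac\theta2$ plus an ordinary sine series in $\theta$. The singular parts coming from $CA$, $CB$, and $TC^2$ must then reproduce the term $\tfrac38\cot\tfrac\theta2\sum\tfrac{nq^n}{1-q^n}$; checking this already forces the identity $\sum_{n\ge1}\tfrac{q^n}{(1-q^n)^2}=\sum_{n\ge1}\tfrac{nq^n}{1-q^n}$ (both equal $\sum_N\sigma_1(N)q^N$ by \eqref{sigma}), a reassuring consistency test of the whole method.

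The substantive difficulty lies in the two genuine products of Lambert series, namely $TA$ and the cosine part of $TB$ (the constant-in-$\theta$ parts of $CB$ and $TB$ feed the two terms $\tfrac38\cot\tfrac\theta2\sum\tfrac{nq^n}{1-q^n}$ and $\tfrac32 T\sum\tfrac{nq^n}{1-q^n}$). Expanding $\tfrac{q^n}{1-q^n}=\sum_{a\ge1}q^{an}$ and $\tfrac{q^m}{(1-q^m)^2}=\sum_{b\ge1}bq^{bm}$ turns these into quadruple sums; after the product-to-sum reduction and after collecting the coefficient of $\sin k\theta$ in each power $q^N$, one is left with convolutions of the type $\sum b$ over $an+bm=N$ with $n\pm m=k$. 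The main obstacle is to collapse these convolutions, by partial fractions in $q$, into the single Lambert series $\tfrac{q^N}{(1-q^N)^3}$, $\tfrac{(N+1)q^N}{(1-q^N)^2}$, and $\tfrac{(2N^2+1)q^N}{1-q^N}$ with precisely the coefficients $-\tfrac32$, $\tfrac34$, and $-\tfrac1{16}$ that appear in \eqref{RamamaniTri}. This coefficient bookkeeping is elementary but delicate, and it is the real crux of the proof.

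Alternatively, and more conceptually, I would argue by Liouville's theorem, which is essentially Ramamani's elliptic-function route. Viewed as a function of $\theta$ (with $q$ fixed), $S$ is odd with a single simple pole at $\theta\equiv 0$ and a definite quasi-period attached to $q$; hence $S^3$ is odd with a pole of order three, and every term on the right of \eqref{RamamaniTri} is visibly of the same type and transforms in the same way under the quasi-period. The difference of the two sides is therefore an elliptic function with no poles, hence constant, and that constant vanishes because both sides are odd. This avoids the convolution computation entirely, at the cost of first setting up the quasi-periodicity of $S$; I would use it as a cross-check on the direct calculation rather than as the primary argument.
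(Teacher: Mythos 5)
First, a point of comparison that matters here: the paper does not prove this lemma at all. It is quoted verbatim from Ramamani's paper (her eq.\ (1.5)), where it is established by the theory of elliptic functions --- essentially your second, ``Liouville'' route. So there is no in-paper argument to measure yours against; any complete proof you supply would be new relative to this paper, and the economical alternative is simply to cite Ramamani as the author does.

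Judged as a proof, however, what you have is a plan whose decisive step is missing. Your first strategy ($S^3=S\cdot S^2$ with \eqref{Trigo} substituted for $S^2$) is sound in outline, your two cotangent--kernel identities are correct, and your singular-part check does work out: the $\cot\frac{\theta}{2}$ contributions of $CA$, $CB$, and $TC^2$ are $\tfrac14$, $0$, and $\tfrac18$ times $\cot\frac{\theta}{2}\sum_{n\ge1}\frac{nq^n}{1-q^n}$ respectively, summing to the required $\tfrac38$, once one uses $\sum_{n\ge1}\frac{q^n}{(1-q^n)^2}=\sum_{n\ge1}\frac{nq^n}{1-q^n}$. But the entire content of the identity is the collapse of the genuine convolutions --- $TA$ and the cosine part of $TB$, after the product-to-sum reduction and after peeling off $\tfrac32 T\sum\frac{nq^n}{1-q^n}$ --- into exactly the three single Lambert series $\frac{q^n}{(1-q^n)^3}$, $\frac{(n+1)q^n}{(1-q^n)^2}$, $\frac{(2n^2+1)q^n}{1-q^n}$ with the coefficients $-\tfrac32$, $\tfrac34$, $-\tfrac1{16}$. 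You correctly identify this as ``the real crux'' and then do not perform it; nothing in the proposal certifies that these particular denominators and coefficients emerge rather than some other combination, and that is where every error would hide. The Liouville alternative is in the same state: $S$ is only quasi-periodic in $\theta$ (and the individual terms $\cot\frac{\theta}{2}$ and $\sum q^n\sin n\theta/(1-q^n)^k$ are not elliptic), so one must actually compute the quasi-period defect of both sides, match the full order-three principal part (the $\theta^{-3}$ \emph{and} $\theta^{-1}$ coefficients, the latter being a nontrivial $q$-series identity in its own right), and only then invoke oddness to kill the constant. Until one of these two computations is carried through, the lemma is asserted, not proved.
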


\begin{theorem}\label{TDe}
If $e(q)$ is defined by \eqref{e}, then $e(q)$ satisfies differential
equation \eqref{de}.
\end{theorem}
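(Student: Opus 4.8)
The plan is to mirror the proof of Theorem \ref{TDmpmq}, now using the cubic identity \eqref{RamamaniTri} in place of the quadratic one \eqref{Trigo}. First I would replace $\theta$ by $\pi+\theta$ throughout \eqref{RamamaniTri}, using $\cot\frac{\pi+\theta}{2}=-\tan\frac{\theta}{2}$, $\sin n(\pi+\theta)=(-1)^n\sin n\theta$ and $\cos n(\pi+\theta)=(-1)^n\cos n\theta$, and then expand both sides in Taylor series about $\theta=0$. After the substitution every term is an odd function of $\theta$, so only odd powers survive and the two sides may be matched coefficient-by-coefficient in $\theta,\theta^3,\dots$. Since $e=1+24\Psi_{1,0}$, the identity \eqref{dpsirs} gives $q\,\frac{de}{dq}=24\Psi_{2,1}$, so the whole problem reduces to producing the series $\Psi_{2,1}$ from \eqref{RamamaniTri} and identifying the resulting combination of $\Psi$-functions with $\tfrac12(e\mP-\mQ)$.

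The coefficient of $\theta^1$ is the natural one to compare. On the left the cube begins at order $\theta^3$, so its $\theta^1$-coefficient vanishes and the relation comes entirely from the right-hand side. The series $\Psi_{2,1}$ is generated by the cubic-pole term: using $\frac{q^n}{(1-q^n)^2}=\sum_{m\ge1}mq^{mn}$ and $\frac{q^n}{(1-q^n)^3}=\tfrac12\sum_{m\ge1}(m^2+m)q^{mn}$, the summand $-\tfrac32\sum\frac{q^n}{(1-q^n)^3}\sin n\theta$ contributes $\tfrac34(\Psi_{2,1}+\Psi_{1,1})$ to the $\theta^1$-coefficient, while the remaining three $\sin n\theta$-sums contribute combinations of $\Psi_{1,2},\Psi_{1,1},\Psi_{0,3}$ and $\Psi_{0,1}$ (with the $\Psi_{1,1}$ terms cancelling).

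The main obstacle is the pair of terms $\tfrac38\cot\frac{\theta}{2}\sum\frac{nq^n}{1-q^n}$ and $\tfrac32\big(\sum\frac{q^n}{1-q^n}\sin n\theta\big)\big(\sum\frac{nq^n}{1-q^n}\big)$: both involve the level-one series $\Phi_{0,1}=\sum\frac{nq^n}{1-q^n}$, which is not itself one of the $\Psi$-functions, and the second is a genuine product that will produce $\Phi_{0,1}\Psi_{0,1}$. I would dispose of $\Phi_{0,1}$ with the elementary identity $\Phi_{0,1}=2\Psi_{1,0}-\Psi_{0,1}$ (equivalently, the vanishing of the $\theta^0$-coefficient of \eqref{ETrigo}), which converts these two terms into expressions in $\Psi_{1,0},\Psi_{0,1}$ together with the products $\Psi_{0,1}\Psi_{1,0}$ and $\Psi_{0,1}^2$.

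Collecting the $\theta^1$-coefficients then yields $24\Psi_{2,1}$ as an explicit combination of $\Psi_{1,2},\Psi_{0,3},\Psi_{0,1},\Psi_{1,0},\Psi_{0,1}^2$ and $\Psi_{0,1}\Psi_{1,0}$. The last step is to eliminate $\Psi_{1,2}$ by means of \eqref{e-dmp}, which I would rewrite as $2\Psi_{1,2}-4\Psi_{0,1}^2=\Psi_{0,1}+\Psi_{0,3}$ using $\mP=1+8\Psi_{0,1}$ and $\mQ=1-16\Psi_{0,3}$; after this substitution the $\Psi_{0,1}^2$ terms cancel and the surviving expression regroups, via $e=1+24\Psi_{1,0}$, $\mP=1+8\Psi_{0,1}$ and $\mQ=1-16\Psi_{0,3}$, into exactly $24\Psi_{2,1}=\tfrac12(e\mP-\mQ)$, which is \eqref{de}. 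I expect the bookkeeping of the several $\Psi$-products to be the most error-prone part, but no idea beyond those already used for \eqref{dmp}--\eqref{dmq} is required.
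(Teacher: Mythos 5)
Your proposal is correct and follows essentially the same route as the paper: replace $\theta$ by $\pi+\theta$ in \eqref{RamamaniTri}, compare coefficients of $\theta$ (the cube on the left contributing $0$), extract $\Psi_{2,1}$ from the cubic-pole term, eliminate $\Phi_{0,1}$ via $P=3\mP-2e$ (your identity $\Phi_{0,1}=2\Psi_{1,0}-\Psi_{0,1}$ is exactly this lemma) and $\Psi_{1,2}$ via \eqref{e-dmp}, then conclude from $q\frac{de}{dq}=24\Psi_{2,1}=\frac{e\mP-\mQ}{2}$. The only difference is organizational -- you keep everything in the $\Psi_{r,s}$ coordinates until the final regrouping, while the paper converts each partial sum $S_i$ into $\mP$, $e$, $\mQ$ as it goes -- and I have checked that your bookkeeping does close up to give $\Psi_{2,1}=\tfrac{1}{48}(e\mP-\mQ)$.
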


Note that for the proof of Theorem \ref{TDe}, Ramamani \cite[p. 116]{Ramamani-thesis} briefly mentioned that the equation \eqref{de} can be obtained by comparing the coefficient of $\theta$ in the Taylor expansions around $0$ in \eqref{RamamaniTri}, after replacing $\theta$ by $\pi +\theta$. We will show this here in detail. We first need the following simple, but useful fact.

\begin{lemma} 
Let $P(q)$, $\mP(q)$, and $e(q)$ be as in \eqref{P}, \eqref{mp}, and \eqref{e}, then
\begin{equation}\label{3mp2e}
P(q)=3\mP(q)-2e(q).
\end{equation}
\end{lemma}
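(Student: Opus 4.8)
The plan is to verify \eqref{3mp2e} directly at the level of $q$-expansions, since $P$, $\mP$, and $e$ are each given by an explicit Lambert-type series. Substituting \eqref{P}, \eqref{mp}, and \eqref{e}, the constant terms combine as $3-2=1$, which already matches the constant term of $P$, so the claim reduces to the single identity
\begin{equation*}
\sum_{n=1}^{\infty}\frac{(-1)^{n-1}nq^n}{1-q^n}-2\sum_{n=1}^{\infty}\frac{nq^n}{1+q^n}=-\sum_{n=1}^{\infty}\frac{nq^n}{1-q^n},
\end{equation*}
or equivalently $\Phi_{0,1}+\Psi_{0,1}=2\Psi_{1,0}$ in the notation of \eqref{Psi}.

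Next I would exploit the alternative representation \eqref{alt-e}, recast as
\begin{equation*}
\Psi_{1,0}=\sum_{n=1}^{\infty}\frac{nq^n}{1+q^n}=\sum_{\substack{m\geq 1\\ m\ \text{odd}}}\frac{mq^m}{1-q^m},
\end{equation*}
so that $\Psi_{1,0}$ is exactly the odd-index part of $\Phi_{0,1}$. I would then split both $\Phi_{0,1}=\sum_{n}\frac{nq^n}{1-q^n}$ and $\Psi_{0,1}=\sum_{n}\frac{(-1)^{n-1}nq^n}{1-q^n}$ according to the parity of $n$. Writing $S$ for the common odd-index subseries $\sum_{n\ \text{odd}}\frac{nq^n}{1-q^n}$ and $T$ for the even-index subseries $\sum_{n\ \text{even}}\frac{nq^n}{1-q^n}$, the factor $(-1)^{n-1}$ equals $+1$ on odd $n$ and $-1$ on even $n$, so $\Phi_{0,1}=S+T$ while $\Psi_{0,1}=S-T$. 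Adding gives $\Phi_{0,1}+\Psi_{0,1}=2S=2\Psi_{1,0}$, which is precisely the required identity.

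I do not expect a genuine obstacle here: the content is bookkeeping, and the only point needing a word of justification is the rearrangement of terms in splitting each series into its even- and odd-index parts. This is legitimate because the Lambert series converge absolutely for $|q|<1$; it is in fact already built into the elementary manipulation $\frac{x}{1+x}=\frac{x}{1-x}-\frac{2x^2}{1-x^2}$ used to obtain \eqref{alt-e}. An alternative route would bypass the parity split by applying that same elementary identity with $x=q^n$ termwise to $\Phi_{0,1}$ and $\Psi_{0,1}$ and collecting, but the parity argument makes the structural reason for the coefficients $3$ and $-2$ most transparent.
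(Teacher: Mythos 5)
Your proposal is correct and follows essentially the same route as the paper: the paper's proof also invokes the alternative representation \eqref{alt-e} and splits $\mP$ into its odd- and even-index Lambert subseries before collecting terms to recover $1-24\sum_{n\geq 1}\frac{nq^n}{1-q^n}$. Your reduction to $\Phi_{0,1}+\Psi_{0,1}=2\Psi_{1,0}$ is just a reorganization of the same computation, and the convergence remark is fine.
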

\begin{proof}
By \eqref{alt-e}, we obtain {\allowdisplaybreaks\begin{align*}
3\mP(q)-2e(q)=&3\Big(1+8\sum_{n=1}^{\infty}\frac{(2n-1)q^{2n-1}}{1-q^{2n-1}}
-8\sum_{n=1}^{\infty}\frac{2nq^{2n}}{1-q^{2n}}\Big)\\
&-2\Big(1+24\sum_{n=1}^{\infty}\frac{(2n-1)q^{2n-1}}{1-q^{2n-1}}\Big)\\
=&1-24\sum_{n=1}^{\infty}\frac{(2n-1)q^{2n-1}}{1-q^{2n-1}}-24\sum_{n=1}^{\infty}\frac{2nq^{2n}}{1-q^{2n}}\\
=&1-24\sum_{n=1}^{\infty}\frac{nq^n}{1-q^n}.\end{align*}}
\end{proof}
Now to simplify the infinite series on the right hand side of
\eqref{RamamaniTri}, a new series representation for $\Psi_{2,1}$
shown below is necessary.

\begin{lemma}
If $\Psi_{r,s}(q)$ is defined by \eqref{Psi}, then
\begin{equation}\label{NewPsi21}
\Psi_{2,1}(q):=\sum_{n=1}^{\infty}\frac{(-1)^{n-1}nq^n(1+q^n)}{(1-q^n)^3}.
\end{equation}
\end{lemma}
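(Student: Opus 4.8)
The plan is to evaluate the double series defining $\Psi_{2,1}$ by summing over the inner index first, thereby reducing the whole statement to one elementary power series identity. First I would unwind the definition \eqref{Psi} with $r=2$, $s=1$, so that
\[
\Psi_{2,1}(q)=\sum_{m=1}^{\infty}\sum_{n=1}^{\infty}(-1)^{n-1}m^2 n\,q^{mn}.
\]
For $|q|<1$ the terms are dominated in absolute value by $m^2 n\,|q|^{mn}$, whose sum is finite, so the double series converges absolutely and may be rearranged freely. I would therefore fix $n$, factor out $(-1)^{n-1}n$, and sum over $m$ alone, setting $x=q^n$ so that the inner sum becomes $\sum_{m=1}^{\infty}m^2 x^m$.

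The key step is the closed form
\[
\sum_{m=1}^{\infty}m^2 x^m=\frac{x(1+x)}{(1-x)^3}\qquad(|x|<1),
\]
which I would derive by differentiating the geometric series $\sum_{m\ge 0}x^m=(1-x)^{-1}$: one differentiation gives $\sum_{m\ge 1}m\,x^m=x(1-x)^{-2}$, and differentiating once more and multiplying by $x$ yields the stated formula. Substituting $x=q^n$ and restoring the outer factor $(-1)^{n-1}n$ then gives
\[
\Psi_{2,1}(q)=\sum_{n=1}^{\infty}(-1)^{n-1}n\cdot\frac{q^n(1+q^n)}{(1-q^n)^3},
\]
which is precisely the asserted identity \eqref{NewPsi21}.

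I expect no genuine obstacle here: the entire content of the lemma is the power series identity for $\sum_{m\ge1}m^2x^m$, and the only point meriting explicit mention is the interchange in the order of summation, which is justified at once by the absolute convergence of the defining double series for $|q|<1$.
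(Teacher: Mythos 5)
Your proof is correct and complete: the interchange of the order of summation is justified by absolute convergence for $|q|<1$, and the closed form $\sum_{m\ge1}m^2x^m=x(1+x)/(1-x)^3$ yields the identity at once. The paper reaches the same conclusion by a cosmetically different route --- it first records $\Psi_{1,0}(q)=\sum_{n\ge1}(-1)^{n-1}q^n/(1-q^n)^2$ and then applies $q\,d/dq$ together with the relation $q\,\tfrac{d}{dq}\Psi_{1,0}=\Psi_{2,1}$ from \eqref{dpsirs} --- but the underlying computation is the same differentiation of the geometric series, so your argument is essentially equivalent and, if anything, more self-contained.
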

\begin{proof}
By the definition of $\Psi_{r,s}(q)$, we easily derive that
\begin{equation}
\Psi_{1,s}(q)=\sum_{n=1}^{\infty}\frac{(-1)^{n-1}n^sq^n}{(1-q^n)^2}.\label{Psi1s}
\end{equation}
Setting $s=0$ in \eqref{Psi1s}, we find that
$$\sum_{n=1}^{\infty}\frac{(-1)^{n-1}q^n}{(1-q^n)^2}=\Psi_{1,0}(q).\label{Psi10}$$
Differentiate both sides of the above equality and then multiply
by $q$. By \eqref{dpsirs}, in the case $r=1$, we complete the
proof.
\end{proof}

\begin{proof}[Proof of Theorem \ref{TDe}.] 
After replacing $\theta$ by $\pi +\theta$ in \eqref{RamamaniTri}
and comparing the coefficients of $\theta$ for the Taylor
expansions at $0$, we can find that
{\allowdisplaybreaks\begin{align}
0=&-\frac{3}{2}\sum_{n=1}^{\infty}\frac{(-1)^{n-1}nq^n}{(1-q^n)^3}+\frac{3}{4}\sum_{n=1}^{\infty}
\frac{(-1)^{n-1}nq^n}{(1-q^n)^2}+\frac{3}{4}\sum_{n=1}^{\infty}\frac{(-1)^{n-1}n^2q^n}{(1-q^n)^2}\\
&-\frac{1}{16}\sum_{n=1}^{\infty}\frac{(-1)^{n-1}(2n^3+n)q^n}{1-q^n}
+\left(\frac{3}{16}\sum_{n=1}^{\infty}\frac{nq^n}{1-q^n}\right)\cdot\left(1+8\sum_{n=1}^{\infty}
\frac{(-1)^{n-1}nq^n}{1-q^n}\right).\nonumber
\end{align}}
For convenience, set {\allowdisplaybreaks\begin{align}
S_1:=&-\frac{3}{2}\sum_{n=1}^{\infty}
\frac{(-1)^{n-1}nq^n}{(1-q^n)^3}
   +\frac{3}{4}\sum_{n=1}^{\infty} \frac{(-1)^{n-1}nq^n}{(1-q^n)^2}, \\
S_2:=&\frac{3}{4}\sum_{n=1}^{\infty} \frac{(-1)^{n-1}n^2q^n}{(1-q^n)^2},\\
S_3:=&-\frac{1}{16}\sum_{n=1}^{\infty} \frac{(-1)^{n-1}(2n^3+n)q^n}{1-q^n},\\
S_4:=&\left(\frac{3}{16}\sum_{n=1}^{\infty}\frac{nq^n}{1-q^n}\right)\cdot
\left(1+8\sum_{n=1}^{\infty}\frac{(-1)^{n-1}nq^n}{1-q^n}\right).
\end{align}}
By \eqref{NewPsi21} and a simple calculation, we obtain $$S_1=-\frac{3}{4}\Psi_{2,1}.$$ For $S_2$, we have
\begin{equation}
S_2=\frac{3}{4}q\frac{d}{dq}
\left(\sum_{n=1}^{\infty}\frac{(-1)^{n-1}nq^n}{1-q^n} \right)
=\frac{3}{32} \left( q\frac{d\mP}{dq} \right)=\frac{3}{128}
(\mP^2-\mQ),
\end{equation}
where the last equality comes from \eqref{dmp}. By the definitions
of $\mP$ and $\mQ$ given in \eqref{mp} and \eqref{mq},
respectively, $$S_3=\frac{\mQ-\mP}{128}.$$ Finally, use
\eqref{3mp2e} to deduce that
$$S_4=\frac{3(1-P)\mP}{128}=\frac{(1-3\mP-2e)\mP}{128}.$$ It
follows that
\begin{align*}
0=&S_1+S_2+S_3+S_4 \\
 =&-\frac{3}{4}\Psi_{2,1}+\frac{3(\mP^2-\mQ)}{128}+\frac{\mQ-\mP}{128}+\frac{\mP-3\mP^2-2e\mP}{128},
\end{align*}
and hence we obtain
$$\Psi_{2,1}=\frac{e\mP-\mQ}{48}.$$ By using $$q\frac{de}{dq}=24\Psi_{2,1},$$ we complete the proof.
\end{proof}

\noindent \textbf{Remark.} It is possible to derive \eqref{dmp}--\eqref{dmq} from the analogous formulas in level one.

Let $\Theta:=q\frac{d}{dq}=\frac{1}{2\pi i}\frac{d}{dz},$ and use the notations $\mE_2$, $e_2$, and $\mE_4$, rather than $\mP$, $e$, and $\mQ$,
respectively, because we want to focus on their weights. Then the
differential equations \eqref{dmp}--\eqref{dmq} are, in these
notations, {\allowdisplaybreaks\begin{align}
\Theta \mE_2&=\frac{\mE_2^2-\mE_4}{4}, \label{dmp-w}\\
\Theta e_2&=\frac{e_2\mE_2-\mE_4}{2},\label{de-w}\\
\Theta \mE_4&=\mE_2\mE_4-e_2\mE_4.\label{dmq-w}
\end{align}}For an even integer $k \geq 2$, let $M_k(\Gamma_0(2))$ denote the space of modular forms of weight $k$ on $\Gamma_0(2).$ Then we know that the operator
\begin{equation}\label{operator1}
f \rightarrow \Theta f-\frac{k}{12}E_2f
\end{equation}
maps $M_k(\Gamma_0(2))$ to $M_{k+2}(\Gamma_0(2))$ (see Exercise no. 7 \cite[p. 123]{Koblitz}). Let $g(z):=E_2(z)-2E_2(2z).$ Then $g(z)\in M_2(\Gamma_0(2))$ (see \cite[Lemma 3.3]{BBY}). So for any constant $\alpha$, the operator
\begin{equation}\label{operator2}
f \rightarrow \Theta f-\frac{k}{12}(E_2+\alpha g)f
\end{equation}
maps $M_k(\Gamma_0(2))$ to $M_{k+2}(\Gamma_0(2)).$ In particular, if we set $\alpha=-2$, and use \eqref{3mp2e}, i.e., $\mE_2(z)=(4E_2(2z)-E_2(z))/3,$
then we have the following lemma.
\begin{lemma}\label{lemmaforoperator}
Let $f\in M_k(\Gamma_0(2)),$ then
\begin{equation}\label{myoperator}
\Theta f-\frac{k}{4}\mE_2 f \in M_{k+2}(\Gamma_0(2)).
\end{equation} 
\end{lemma}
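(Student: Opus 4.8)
The plan is to obtain \eqref{myoperator} as the special case $\alpha=-2$ of the operator \eqref{operator2}, so that the only real work is to check that with this choice of $\alpha$ the weight-two coefficient $\tfrac{k}{12}(E_2+\alpha g)$ collapses to $\tfrac{k}{4}\mE_2$.

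First I would recall, from the discussion preceding the lemma, that for every constant $\alpha$ the map $f\mapsto \Theta f-\tfrac{k}{12}(E_2+\alpha g)f$ sends $M_k(\Gamma_0(2))$ into $M_{k+2}(\Gamma_0(2))$. Indeed, the Serre-derivative part $\Theta f-\tfrac{k}{12}E_2 f$ already lands in $M_{k+2}(\Gamma_0(2))$ by \eqref{operator1}, and since $g\in M_2(\Gamma_0(2))$ the extra term $-\tfrac{k}{12}\alpha\, g f$ is $\tfrac{k}{12}\alpha$ times a product $gf\in M_{k+2}(\Gamma_0(2))$; thus modularity and holomorphy at both cusps are preserved. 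Hence \eqref{operator2} is legitimate for any $\alpha$, in particular for $\alpha=-2$.

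Next I would identify the coefficient. Setting $\alpha=-2$ produces the weight-two form $E_2-2g$. Using the definition $g(z)=E_2(z)-2E_2(2z)$ I would rewrite $E_2-2g=E_2-2\bigl(E_2-2E_2(2z)\bigr)=4E_2(2z)-E_2(z)$, and then invoke \eqref{3mp2e} in the weight notation, namely $\mE_2=(4E_2(2z)-E_2(z))/3$, to conclude $E_2-2g=3\mE_2$. Therefore $\tfrac{k}{12}(E_2-2g)=\tfrac{k}{4}\mE_2$, and \eqref{operator2} with $\alpha=-2$ is exactly the map $f\mapsto \Theta f-\tfrac{k}{4}\mE_2 f$, which consequently carries $M_k(\Gamma_0(2))$ to $M_{k+2}(\Gamma_0(2))$. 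This is precisely the assertion \eqref{myoperator}.

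There is no genuine analytic obstacle here: the substantive input, that the level-two Serre derivative $f\mapsto\Theta f-\tfrac{k}{12}E_2 f$ preserves modularity, is the cited Koblitz exercise, and everything remaining is the bookkeeping identity $E_2-2g=3\mE_2$. The only points demanding care are making sure the linear relation \eqref{3mp2e} is read correctly under the renaming $(\mP,e,\mQ)\leftrightarrow(\mE_2,e_2,\mE_4)$, and that $g$ is truly a holomorphic modular form of weight $2$ on $\Gamma_0(2)$, so that $gf\in M_{k+2}(\Gamma_0(2))$; both are furnished by \eqref{3mp2e} and the cited \cite[Lemma 3.3]{BBY}.
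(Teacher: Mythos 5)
Your proposal is correct and follows exactly the paper's own route: the lemma is obtained as the $\alpha=-2$ case of the operator \eqref{operator2}, using the Koblitz exercise for \eqref{operator1}, the fact that $g=E_2(z)-2E_2(2z)\in M_2(\Gamma_0(2))$ from \cite[Lemma 3.3]{BBY}, and the identity $E_2-2g=4E_2(2z)-E_2(z)=3\mE_2$ coming from \eqref{3mp2e}. The coefficient bookkeeping $\tfrac{k}{12}\cdot 3\mE_2=\tfrac{k}{4}\mE_2$ is verified correctly, so nothing is missing.
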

Now, by \eqref{myoperator} applied to the modular form $e_2 \in M_2(\Gamma_0(2))$, we have $\Theta e_2-\frac{\mE_2e_2}{2} \in M_4(\Gamma_0(2)).$ By computing the first three terms in the $q$-expansion (which are enough to exceed the bound coming from the valence formula), we can prove the equality $$\Theta e_2-\frac{\mE_2e_2}{2}=-\frac{\mE_4}{2},$$ which is exactly \eqref{de-w}. Similarly, we can derive \eqref{dmq-w}, after applying \eqref{myoperator} to $\mE_4 \in M_4(\Gamma_0(2)).$

Since $\mE_2$ is not a modular form of weight $2$, we cannot use Lemma \ref{lemmaforoperator} to derive \eqref{dmp-w}. So first we prove that $\mE_2^2-4\Theta \mE_2 \in M_4(\Gamma_0(2)).$ We need the transformation formula for $\mE_2$.
\begin{lemma}
We have
\begin{equation}\label{rel-mE2}
\mE_2 \Big( \frac{az+b}{cz+d} \Big) =(cz+d)^2\mE_2(z)
+\frac{2}{\pi i}c(cz+d), \quad \begin{pmatrix} a
& b \\ c & d\end{pmatrix} \in \Gamma_0(2).
\end{equation}
\end{lemma}
\begin{proof}
Recall the transformation formula \cite[p. 68]{Schoeneberg} 
\begin{equation}\label{rel-E2}
E_2\Big( \frac{az+b}{cz+d} \Big) =(cz+d)^2E_2(z)+\frac{6}{\pi i}c(cz+d),
\quad \begin{pmatrix} a & b \\ c & d\end{pmatrix} \in SL_2(\mathbb{Z}).
\end{equation}
So for $\left(\begin{smallmatrix} a & b \\ c & d\end{smallmatrix}\right)\in \Gamma_0(2),$ 
$$E_2 \left(2\Big( \frac{az+b}{cz+d}\Big)\right)=E_2 \left(\frac{a(2z)+b}{\frac{c}{2}(2z)+d}\right)=(cz+d)^2E_2(2z)
+\frac{3}{\pi i}c(cz+d).$$ Hence,
{\allowdisplaybreaks\begin{align*}
\mE_2 \Big( \frac{az+b}{cz+d} \Big)=&\frac{4}{3}\Big((cz+d)^2E_2(2z)+\frac{3}{\pi i}c(cz+d)\Big)\\
&-\frac{1}{3}\Big((cz+d)^2E_2(z)+\frac{6}{\pi i}c(cz+d)\Big)\\
=&(cz+d)^2\mE_2(z)+\frac{2}{\pi i}c(cz+d).
\end{align*}}
\end{proof}
It is clear from \eqref{rel-mE2} that
\begin{equation}\label{thetamE2}
\Theta \mE_2 \Big( \frac{az+b}{cz+d} \Big)=(cz+d)^4\Theta \mE_2(z)+\frac{c(cz+d)^3}{\pi i}\mE_2(z)-\frac{c^2(cz+d)^2}{\pi^2}.
\end{equation}
So, by \eqref{rel-mE2} and \eqref{thetamE2}, for $\left(\begin{smallmatrix} a & b \\ c & d\end{smallmatrix}\right)\in \Gamma_0(2),$
{\allowdisplaybreaks\begin{align*}
\mE_2^2\Big(\frac{az+b}{cz+d}\Big)-4\Theta \mE_2\Big(\frac{az+b}{cz+d}\Big)=&\Big((cz+d)^2\mE_2(z)+\frac{2}{\pi i}c(cz+d)\Big)^2\\
-&4\Big((cz+d)^4\Theta \mE_2(z)+\frac{c(cz+d)^3}{\pi i}\mE_2(z)-\frac{c^2(cz+d)^2}{\pi^2}\Big)\\
=&(cz+d)^4\Big(\mE_2^2(z)-4\Theta \mE_2(z)\Big).
\end{align*}}
Hence $\mE_2^2-4\Theta \mE_2 \in M_4(\Gamma_0(2))$. Then by examining at the first three terms in the $q$-expansions, we find that $$\mE_2^2-4\Theta \mE_2=\mE_4,$$ which is the desired result \eqref{dmp-w}.

\section{A differential equation depending on weights of modular forms}

The differential equation in the upper half plane $z \in
\mathbb{H}$
\begin{equation}\label{fulldiff}
f''(z)-\frac{k+1}{6}E_2(z)f'(z)+\frac{k(k+1)}{12}E_2'(z)f(z)=0,
\end{equation}
was originally studied by M. Kaneko and D. Zagier in~\cite{K&Z}.
Here the symbol $'$ denotes the $\Theta$-operator $(2 \pi i)^{-1}
d/dz=q \cdot d/dq,~ (q=e^{2 \pi i z})$. For convenience, in this section, we will use this notation. Then it is known, that for $k \equiv 0, 4 \pmod{12} $, there exists a modular solution of~(\ref{fulldiff})
$$E_4(z)^{k/4}F\left( -\frac{k}{12}, -\frac{k-4}{12}; -\frac{k-5}{6}; \frac{1728}{j(z)} \right),$$ where
$$F(a, b; c; x)=\sum_{n=0}^{\infty}\frac{(a)_n(b)_n}{(c)_n}\frac{x^n}{n!} ,
\quad (a)_n=a(a+1)\cdots (a+n-1), \quad |x|<1,$$ is the Gaussian
hypergeometric series, and  $j(z)$ is the elliptic modular
invariant. Various modular forms on some subgroups were obtained
in \cite{K&K1} as solutions to this differential equation, where
the groups depend on the choice of $k$. In particular, on
$\Gamma_0(4)$, Ono~\cite{Ono} constructed a family of
differential endomorphisms and carried out a similar analysis for
modular forms on this group, including those of half-integral
weight.

In addition to the modular solutions, quite remarkable was an occurrence of a {\it quasi--modular form}, not of weight $k$ as in the modular case but of weight $k+1$. Along the same lines, Kaneko and Koike \cite{K&K2} found some examples of quasi--modular forms as solutions to an analogous differential equation attached to the group $\Gamma_0^*(2)$, which are {\it not} contained in the full modular group, where $\Gamma_0^*(2)$ is defined by
$$\Gamma_0^*(2)= \left\langle \Gamma_0(2), \begin{pmatrix} 0 & -1 \\ 2 & 0 \end{pmatrix}\right\rangle, $$
where the modular subgroup $\Gamma_0(2)$
is defined in \eqref{gamma2}.

The differential equation on which we focus in this section is
\begin{equation}\label{mydiff}
f''(z)-\frac{k+1}{2}\mathcal{E}_2(z)f'(z)+\frac{k(k+1)}{4}\mathcal{E}_2'(z)f(z)=0,
\end{equation}
where $\mE_2(z)$ is a quasi--modular form of weight $2$ on
$\Gamma_0(2)$ defined by \eqref{eisen}, i.e.,
$$\mathcal{E}_2(z)=1+8\sum_{n=1}^{\infty}\frac{(-1)^{n-1}nq^n}{1-q^n}.$$ In the present section,
for any positive even $k$, we construct solutions of the
differential equation \eqref{mydiff}, which are indeed modular
forms of weight $k$ on $\Gamma_0(2)$.

A simple calculation using the differential
equations~\eqref{dmp-w}--\eqref{dmq-w} shows that $\mathcal{E}_2$
is the logarithmic derivative of the modular form
\begin{equation}\label{D}
D:=\frac{e_2^2-\mathcal{E}_4}{64}=q+8q^2+28q^3+64q^4+ \cdots
\end{equation}
of weight $4$ on $\Gamma_0(2)$.

Define a sequence of polynomials $A_n(x)$ by
\begin{equation}\label{rr-A}
A_0(x)=1,~A_1(x)=x,~A_{n+1}(x)=xA_n(x)+\lambda_n A_{n-1}(x)~(n=1,2,\dots),
\end{equation} where
\begin{equation}\label{lambda}
\lambda_n=-64\frac{(n+1)^2}{(2n+1)(2n+3)}.
\end{equation}
The polynomial $A_n(x)$ is an even or odd polynomial according as
$n$ is even or odd, respectively. We also define a second sequence
of polynomials $B_n(x)$ by the same recursion with different
initial values as follows:
\begin{equation}\label{rr-B}
B_0(x)=0,~B_1(x)=1,~B_{n+1}(x)=xB_n(x)+\lambda_nB_{n-1}(x)~(n=1,2,\dots).
\end{equation}The polynomial $B_n(x)$ has
opposite parity, i.e., it is even if $n$ is odd and odd if $n$ is
even. Then our first result is given in the following theorem.

\begin{theorem}\label{ortho}
Let $k=2n+2~(n=0, 1, 2, \dots)$. Then the following modular form of
weight $k$ on $\Gamma_0(2)$, 
\begin{equation}\label{ortho-solution}
D^{n/2}A_n\Big(\frac{e_2}{\sqrt{D}}\Big)\frac{2e_2}{3}
+D^{(n-1)/2}B_n\Big(\frac{e_2}{\sqrt{D}}\Big)\frac{\mE_4}{3},
\end{equation}is a solution of \eqref{mydiff}, where $D$ is defined by \eqref{D}.
\end{theorem}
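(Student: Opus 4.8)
The plan is to recast the second-order equation \eqref{mydiff} as an eigenvalue equation for the iterated operator of Lemma \ref{lemmaforoperator}, and then to verify \eqref{ortho-solution} by induction on $n$. Writing $\partial_k:=\Theta-\frac{k}{4}\mE_2$, so that $\partial_k$ sends weight $k$ to weight $k+2$ by Lemma \ref{lemmaforoperator}, a direct expansion using $\mE_2^2=4\,\Theta\mE_2+\mE_4$ (from \eqref{dmp-w}) gives $\partial_{k+2}\partial_k f=f''-\frac{k+1}{2}\mE_2 f'+\frac{k(k+1)}{4}\mE_2' f+\frac{k(k+2)}{16}\mE_4 f$, so that \eqref{mydiff} is equivalent to the eigenvalue equation $\partial_{k+2}\partial_k f=\frac{k(k+2)}{16}\mE_4 f$ with $k=2n+2$. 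From \eqref{de-w}, \eqref{dmq-w} and $\Theta D=\mE_2 D$ I would record the three structural identities $\partial_2 e_2=-\frac12\mE_4$, $\partial_4 D=0$, and $\partial_4\mE_4=-e_2\mE_4$, together with the Leibniz rule $\partial_{a+b}(gh)=(\partial_a g)h+g(\partial_b h)$ for forms of weights $a,b$.

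Next, setting $P_n:=D^{n/2}A_n(e_2/\sqrt D)$ and $Q_n:=D^{(n-1)/2}B_n(e_2/\sqrt D)$, the stated parity of $A_n,B_n$ makes these honest polynomials in $e_2$ and $D$, homogeneous of weights $2n$ and $2n-2$; hence the form $f_n$ of \eqref{ortho-solution} lies in $M_{2n+2}(\Gamma_0(2))$, which already settles the modularity assertion. Since $A_n,B_n$ obey the same recursion \eqref{rr-A}, \eqref{rr-B}, so do $P_n$ and $Q_n$, and therefore so does their fixed combination: $f_{n+1}=e_2 f_n+\lambda_n D f_{n-1}$. I would then prove $\partial_{2n+4}\partial_{2n+2}f_n=\frac{(n+1)(n+2)}{4}\mE_4 f_n$ by induction on $n$. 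Applying the second-order Leibniz rule $\partial_{a+b+2}\partial_{a+b}(gh)=(\partial_{a+2}\partial_a g)h+2(\partial_a g)(\partial_b h)+g(\partial_{b+2}\partial_b h)$ to the recurrence, and using $\partial_4 D=0$ together with the inductive hypotheses for $f_n$ and $f_{n-1}$, the inductive step collapses to the single first-order companion identity $\partial_{2n+2}f_n=-\frac{n+1}{2}e_2 f_n-\frac{2n+3}{2}\lambda_n D f_{n-1}$, which I label $(\star)$.

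The heart of the argument, and the main obstacle, is thus to establish $(\star)$. I would prove it by a parallel induction: applying $\partial$ to $f_{n+1}=e_2 f_n+\lambda_n D f_{n-1}$, substituting $(\star)$ at levels $n$ and $n-1$, and re-using the recurrence to eliminate $e_2 f_{n-1}$, one finds that $(\star)$ propagates precisely when the coefficients satisfy the first-order recursion $\lambda_{n+1}=\frac{(2n+1)\lambda_n-64}{2n+5}$. The decisive computation is then to check that the prescribed values \eqref{lambda} obey this recursion together with the base instance $(\star)$ at $n=1$; granting this, the eigenvalue equation, and hence \eqref{mydiff}, follows for all $n$ from the hand-verified base cases $f_0=\frac23 e_2$ and $f_1=\frac{2e_2^2+\mE_4}{3}$. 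I expect the derivation of $(\star)$ and its reconciliation with \eqref{lambda} to be the only genuinely delicate part of the proof; everything else is forced by the three structural identities for $\partial_2 e_2$, $\partial_4 D$, and $\partial_4\mE_4$.
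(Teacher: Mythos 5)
Your proposal follows essentially the same route as the paper: the same reformulation of \eqref{mydiff} as $\vartheta_{k+2}\vartheta_k(f)=\frac{k(k+2)}{16}\mE_4f$ (the paper's \eqref{alt-mydiff}), the same three structural identities and recurrence $F_{k+2}=e_2F_k+\lambda_nDF_{k-2}$, and the same reduction of the induction to your first-order companion identity $(\star)$, which is precisely the paper's \eqref{final}. The only (cosmetic) difference is in propagating $(\star)$: the paper uses it at one previous level together with the eigenvalue equation, while you substitute it at two previous levels and distill the condition into the recursion $\lambda_{n+1}=((2n+1)\lambda_n-64)/(2n+5)$ — which the values \eqref{lambda} do satisfy — at the cost of needing one additional base case.
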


\noindent \textbf{Remark.} An element of degree $k$ in the ring $\mathbb{C} [e_2, D]$ is referred to as a modular form of weight $k$ on $\Gamma_0(2)$. Note
that $\sqrt{D}$ does not really occur due to the evenness and oddness of $A_n(x)$ and $B_n(x)$ on each $n$.

Let the operator $\vartheta_k$ be denoted by
\begin{equation}\label{theta}
\vartheta_k(f):=f'-\frac{k}{4}\mE_2f,
\end{equation} 
which is the formula \eqref{myoperator} in Lemma~\ref{lemmaforoperator}.
Using \eqref{de-w}, we find that
\begin{equation}
\vartheta_2(e_2)=-\frac{\mE_4}{2}.\label{theta-example1}
\end{equation}
Similarly by \eqref{theta} and \eqref{dmq-w}, we obtain
\begin{equation}\label{theta-example2}
\vartheta_4(\mathcal{E}_4)=-e_2\mE_4.
\end{equation}
We also deduce that
\begin{equation}\label{theta-example3}
\vartheta(D)=\frac{\big(\vartheta(e_2^2)-\vartheta(\mE_4)\big)}{64}=0,
\end{equation}after an application of \eqref{theta-example1} and
\eqref{theta-example2}.

If $f$ and $g$ have weights $k$ and $l$, the Leibniz rule
$$\vartheta_{k+l}(fg)=\vartheta_k(f)g+f\vartheta_l(g)$$ holds. 
We sometimes drop the suffix of the operator $\vartheta_k$ when the
weights of modular forms we consider are clear. With this
$\vartheta_k$ operator, the equation \eqref{mydiff} can be
rewritten in the following lemma.
\begin{lemma}
The differential equation \eqref{mydiff} is equivalent to
\begin{equation}\label{alt-mydiff}
\vartheta_{k+2}\vartheta_k(f)=\frac{k(k+2)}{16}\mE_4f.
\end{equation}
\end{lemma}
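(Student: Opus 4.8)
The plan is to verify the identity \eqref{alt-mydiff} by expanding the composite operator $\vartheta_{k+2}\vartheta_k(f)$ directly from the definition \eqref{theta} and comparing the result with the differential equation \eqref{mydiff}; since the manipulation is a chain of equalities valid for every $f$, the asserted equivalence will follow at once. First I would write $\vartheta_k(f)=f'-\frac{k}{4}\mE_2 f$ and then apply $\vartheta_{k+2}$ to it, treating $\vartheta_k(f)$ as a weight-$(k+2)$ object to which \eqref{theta} is applied. The only subtlety in this first step is that the differentiation $(\vartheta_k(f))'$ produces, by the product rule, a term $\frac{k}{4}\mE_2' f$ in addition to the terms $f''$ and $\frac{k}{4}\mE_2 f'$, so the quasi-modular derivative $\mE_2'=\Theta\mE_2$ enters here.

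Next I would collect the result according to the derivatives of $f$. The second-derivative term $f''$ stands alone; the two first-derivative contributions, $-\frac{k}{4}\mE_2 f'$ from the differentiation and $-\frac{k+2}{4}\mE_2 f'$ from the multiplication by $\mE_2$, combine to $-\frac{k+1}{2}\mE_2 f'$, which already matches the middle term of \eqref{mydiff}. The remaining terms multiply $f$, namely
\begin{equation*}
-\frac{k}{4}\mE_2' f+\frac{k(k+2)}{16}\mE_2^2 f,
\end{equation*}
so that the equation $\vartheta_{k+2}\vartheta_k(f)=\frac{k(k+2)}{16}\mE_4 f$ becomes $f''-\frac{k+1}{2}\mE_2 f'-\frac{k}{4}\mE_2' f+\frac{k(k+2)}{16}(\mE_2^2-\mE_4)f=0$.

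The crucial step is then to eliminate $\mE_2^2$ in favour of $\mE_2'$ by means of \eqref{dmp-w}, written in the form $\mE_2^2-\mE_4=4\mE_2'$. Substituting this turns the coefficient of $f$ into $\big(-\frac{k}{4}+\frac{k(k+2)}{4}\big)\mE_2'=\frac{k(k+1)}{4}\mE_2'$, which is exactly the last term of \eqref{mydiff}; hence \eqref{alt-mydiff} and \eqref{mydiff} coincide. I expect no genuine obstacle: the argument is purely formal once \eqref{dmp-w} is available, and the only thing to watch is the bookkeeping of the weight-dependent constants $\frac{k}{4}$ and $\frac{k+2}{4}$ arising from the two applications of $\vartheta$, together with the correct factor in $\mE_2^2-\mE_4=4\mE_2'$. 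Because every equality in the chain is reversible, the same computation shows that $f$ solves one equation precisely when it solves the other, which establishes the claimed equivalence.
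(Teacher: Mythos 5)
Your proof is correct and follows essentially the same route as the paper: expand $\vartheta_{k+2}\vartheta_k(f)$ from the definition \eqref{theta}, collect the $f''$ and $f'$ terms into $f''-\frac{k+1}{2}\mE_2 f'$, and use \eqref{dmp-w} in the form $\mE_2^2-\mE_4=4\mE_2'$ to identify the remaining coefficient of $f$ with $\frac{k(k+1)}{4}\mE_2'$. Your arrangement (moving everything to one side before invoking \eqref{dmp-w}) in fact makes the two-way equivalence slightly more transparent than the paper's presentation, which substitutes \eqref{mydiff} mid-computation and thus displays only one direction explicitly, but the underlying calculation is identical.
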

\begin{proof}
By the definition of the $\vartheta$--operator in \eqref{theta},
we obtain {\allowdisplaybreaks\begin{align*}
\vartheta_{k+2}\vartheta_k(f)&=\vartheta_{k+2}\Big(f'-\frac{k}{4}\mE_2f\Big)\\
&=\Big(f'-\frac{k}{4}\mE_2f\Big)'-\frac{k+2}{4}\mE_2\Big(f'-\frac{k}{4}\mE_2f\Big)\\
&=-\frac{k(k+2)}{4}\mE'_2f+\frac{k(k+2)}{16}\mE_2^2f,
\end{align*}}
where, in the last equality, we employed the equation \eqref{mydiff}, i.e.
$$f''-\frac{k+1}{2}\mE_2f'=-\frac{k(k+1)}{4}\mE'_2f.$$ So we
complete our proof by using \eqref{dmp}.
\end{proof}

\begin{proof}[Proof of Theorem \ref{ortho}.] Let $F_k$ denote the form in \eqref{ortho-solution} in Theorem~\ref{ortho}. We first establish the recurrence relation
\begin{equation}\label{rr}
F_{k+2}=e_2F_k+\lambda_n DF_{k-2},
\end{equation}
where $n=(k-2)/2$. This is a consequence of the recurrence
relations for $A_n$ and $B_n$ as in \eqref{rr-A} and \eqref{rr-B}, respectively. Then
{\allowdisplaybreaks\begin{align*}
e_2F_k+\lambda_nDF_{k-2}=&e_2\left(D^{n/2}A_n\left(\frac{e_2}{\sqrt{D}}\right)\frac{2e_2}{3}
+D^{(n-1)/2} B_n \left( \frac{e_2}{\sqrt{D}} \right)\frac{\mathcal{E}_4}{3} \right)\\
&+\lambda_n D \left( D^{(n-1)/2} A_{n-1} \left(
\frac{e_2}{\sqrt{D}}\right)\frac{2e_2}{3}
+D^{(n-2)/2} B_{n-1} \left( \frac{e_2}{\sqrt{D}} \right)\frac{\mathcal{E}_4}{3} \right)\\
=&D^{(n+1)/2} \left( \frac{e_2}{\sqrt{D}} A_n \left( \frac{e_2}{\sqrt{D}} \right) +\lambda_n A_{n-1}
\left( \frac{e_2}{\sqrt{D}} \right)\right)\frac{2e_2}{3}\\
&+D^{n/2} \left(\frac{e_2}{\sqrt{D}} B_n \left(
\frac{e_2}{\sqrt{D}} \right)+\lambda_n B_{n-1}
\left( \frac{e_2}{\sqrt{D}} \right) \right)\frac{\mathcal{E}_4}{3}\\
=&D^{(n+1)/2} A_{n+1} \left( \frac{e_2}{\sqrt{D}}
\right)\frac{2e_2}{3}
+D^{n/2} B_{n+1} \left( \frac{e_2}{\sqrt{D}} \right)\frac{\mathcal{E}_4}{3}\\
=&F_{k+2}.
\end{align*}}
Now we prove by induction that $F_k$ satisfies the equation
\eqref{alt-mydiff}. For the base step, we first have to check the
cases $k=2~(n=0)$ and $k=4~(n=1)$. In the case $k=2$, from the
formula \eqref{ortho-solution}, we have $F_2=2e_2/3,$ which is
clearly a modular form of weight $2$ on $\Gamma_0(2)$. Moreover,
using \eqref{theta-example1} and \eqref{theta-example2}, we obtain
$$\vartheta^2\left(\frac{2e_2}{3}\right)
=\vartheta \left(-\frac{\mathcal{E}_4}{3}\right)
=\frac{e_2\mathcal{E}_4}{3}=\frac{2(2+2)}{16}\mathcal{E}_4
\cdot\frac{2e_2}{3},$$ which satisfies the equation
\eqref{alt-mydiff}. Similarly, we can deduce that
$F_4=(2e_2^2+\mathcal{E}_4)/3$ is a modular form of weight $4$
satisfying \eqref{alt-mydiff}.

Assume $F_{k-2}$ and $F_k$ satisfy \eqref{alt-mydiff}. Then by
using \eqref{rr} and the formulas \eqref{theta-example1},
\eqref{theta-example2}, and \eqref{theta-example3}, we have
{\allowdisplaybreaks\begin{align}
\vartheta^2(F_{k+2})=&\vartheta\left(\vartheta(F_k)e_2-\frac{\mathcal{E}_4F_k}{2}\right)
+\lambda_nD\vartheta^2(F_{k-2})\nonumber\\
=&\vartheta^2(F_k)e_2+\frac{e_2\mathcal{E}_4F_k}{2}-\mathcal{E}_4\vartheta(F_k)
+\lambda_n D \vartheta^2(F_{k-2})\nonumber\\
=&\frac{k(k+2)}{16}e_2\mathcal{E}_4F_k+\frac{e_2\mathcal{E}_4F_k}{2}-\mathcal{E}_4\vartheta(F_k)
+\frac{k(k-2)}{16}\lambda_n D \mathcal{E}_4F_{k-2}\nonumber\\
=&\frac{k^2+2k+8}{16}e_2\mathcal{E}_4F_k+\frac{k(k-2)}{16}\lambda_nD\mathcal{E}_4F_{k-2}
-\mathcal{E}_4\vartheta(F_k). \label{alt-theta2}
\end{align}}
Hence we find that, by \eqref{alt-theta2} and \eqref{rr},
{\allowdisplaybreaks\begin{align*}
\vartheta^2(F_{k+2})-&\frac{(k+2)(k+4)}{16}\mathcal{E}_4F_{k+2}\\
=&\left( \frac{k^2+2k+8}{16}-\frac{(k+2)(k+4)}{16} \right)e_2\mathcal{E}_4F_k\\&+\left( \frac{k(k-2)}{16}-\frac{(k+2)(k+4)}{16} \right) \lambda_nD\mathcal{E}_4F_{k-2}-\mathcal{E}_4\vartheta(F_k)\\
=&-\mathcal{E}_4 \left(
\frac{k}{4}e_2F_k+\vartheta(F_k)+\frac{k+1}{2}\lambda_n D F_{k-2}
\right).
\end{align*}}
To prove the theorem it will therefore suffice to show that
\begin{equation}\label{final}
\frac{ke_2F_k}{4}+\vartheta(F_k)=-\frac{k+1}{2}\lambda_n D
F_{k-2}.
\end{equation}
Again, we will prove the equality \eqref{final} by induction on
$k$. For the case $k=4~(n=1)$, the equation is checked directly as
follows by \eqref{theta-example1} and \eqref{theta-example2}:
{\allowdisplaybreaks\begin{align*}
\frac{4e_2F_4}{4}+\vartheta(F_4)=&e_2\frac{2e_2^2+\mathcal{E}_4}{3}+
\vartheta \left( \frac{2e_2^2+\mathcal{E}_4}{3} \right)\\
=&(e_2^2-\mathcal{E}_4)\frac{2e_2}{3}\\
=&-\frac{5}{2} \left( -64\frac{(1+1)^2}{(2\cdot 1+1)(2\cdot 1+3)}
\right)
 \left( \frac{e_2^2-\mathcal{E}_4}{64} \right) \frac{2e_2}{3}\\
=&-\frac{4+1}{2}\lambda_1 D F_2,
\end{align*}}
where the last equality comes from the relation \eqref{lambda} and
\eqref{D}. Using \eqref{rr}, we can rewrite $F_{k+2}$ as
\begin{equation}\label{finalbase}
F_{k+2}=\frac{1}{2(k+1)}((k+2)e_2F_k -4\vartheta(F_k)).
\end{equation}

Assume that \eqref{final} is valid for $k$, i.e.,
$\vartheta^2(F_k)=0$. Hence by \eqref{rr} and by applying the
$\vartheta$--operator to $F_{k+2}$ in \eqref{finalbase}, we find
that {\allowdisplaybreaks\begin{align*}
\frac{k+2}{4}e_2F_{k+2}+\vartheta(F_{k+2})
=&\frac{k+2}{8(k+1)}e_2 ((k+2)e_2F_k-4\vartheta(F_k))\\
&+\frac{k+2}{2(k+1)}\left( -\frac{\mathcal{E}_4F_k}{2}+e_2
\vartheta(F_k) \right)-\frac{2}{k+1}\vartheta^2(F_k)\\
=&\frac{(k+2)^2}{8(k+1)}(e_2^2-\mathcal{E}_4)F_k\\
=&-\frac{k+3}{2}\lambda_{n+1}DF_k.
\end{align*}}
Here we have used the induction assumption. Hence the proof of \eqref{final} is complete, and so then the proof of Theorem \ref{ortho} is also complete.
\end{proof}

We next indicate that the solutions of \eqref{mydiff} have a hypergeometric structure. Let
$$j_2:=\frac{e_2^2}{D}=\frac{1}{q}+40+276q-2048q^2+\cdots.$$
Then $j_2$ is a $\Gamma_0(2)-$invariant function which generates
the field of modular functions on $\Gamma_0(2)$ and the normalized
function $j_2-40$ is often referred to as the ``Hauptmodul'' for the
group $\Gamma_0(2)$.

\begin{theorem}\label{hyper}
For even $k \geq 4$, the differential equation \eqref{mydiff} has
solutions which are normalized modular forms of weight $k$ on
$\Gamma_0(2)$, a generator of which is given by
\begin{equation}\label{hyper-solution}
e_2^{\frac{k}{2}}F\left( -\frac{k}{4}, -\frac{k-2}{4};
-\frac{k-1}{2}; \frac{64}{j_2}\right).
\end{equation}
\end{theorem}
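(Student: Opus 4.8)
The plan is to reduce the modular differential equation \eqref{mydiff}, in its operator form \eqref{alt-mydiff}, to a classical Gaussian hypergeometric equation by means of the substitution $f=e_2^{k/2}g$, where $g$ is viewed as a function of the $\Gamma_0(2)$-invariant variable $x:=64/j_2=64D/e_2^2$. First I would record two identities that drive everything. Since $64D=e_2^2-\mathcal{E}_4$ by \eqref{D}, one has $\mathcal{E}_4=e_2^2(1-x)$. Next, differentiating $x=64D/e_2^2$ and using $\vartheta(D)=0$ from \eqref{theta-example3} (equivalently $\Theta D=\mathcal{E}_2 D$) together with \eqref{de-w}, the $\mathcal{E}_2$-contributions cancel and one is left with
\[
\Theta x = e_2\,x(1-x).
\]

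Next I would record the action of $\vartheta_k$ on the ansatz. Regarding $g=g(x)$ as a weight-zero modular function gives $\vartheta_0(g)=\Theta g=e_2\,x(1-x)\,g'(x)$, while a one-line computation from \eqref{theta} and \eqref{de-w} yields $\vartheta_k(e_2^{k/2})=-\tfrac{k}{4}e_2^{k/2-1}\mathcal{E}_4$. Combining these through the Leibniz rule and substituting $\mathcal{E}_4=e_2^2(1-x)$ produces the compact formula
\[
\vartheta_k\bigl(e_2^{k/2}g\bigr)=e_2^{(k+2)/2}(1-x)\Bigl(x\,g'-\tfrac{k}{4}g\Bigr).
\]
Applying the same formula a second time, with $k$ replaced by $k+2$, to the weight-$(k+2)$ form $\vartheta_k(f)=e_2^{(k+2)/2}h$ with $h:=(1-x)\bigl(x g'-\tfrac{k}{4}g\bigr)$, expresses $\vartheta_{k+2}\vartheta_k(f)$ as $e_2^{(k+4)/2}(1-x)\bigl(x h'-\tfrac{k+2}{4}h\bigr)$. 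Since the right side of \eqref{alt-mydiff} equals $\tfrac{k(k+2)}{16}\mathcal{E}_4 f=\tfrac{k(k+2)}{16}e_2^{(k+4)/2}(1-x)g$, the common factor $e_2^{(k+4)/2}(1-x)$ cancels; expanding $h'$ and dividing by $x$ then collapses \eqref{alt-mydiff} to the single ordinary differential equation
\[
x(1-x)g''+\Bigl[\tfrac{1-k}{2}-\tfrac{3-k}{2}x\Bigr]g'+\tfrac{k(2-k)}{16}\,g=0.
\]

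Finally I would match this with the standard hypergeometric equation $x(1-x)g''+[c-(a+b+1)x]g'-ab\,g=0$, reading off $c=-\tfrac{k-1}{2}$, $a+b=\tfrac{1-k}{2}$ and $ab=\tfrac{k(k-2)}{16}$, which are solved by $a=-\tfrac{k}{4}$ and $b=-\tfrac{k-2}{4}$. The solution analytic at $x=0$ is therefore $g=F\bigl(-\tfrac{k}{4},-\tfrac{k-2}{4};-\tfrac{k-1}{2};x\bigr)$, giving exactly \eqref{hyper-solution}. Because $k$ is even, one of $a,b$ is a non-positive integer, so the series terminates in a polynomial (making convergence a non-issue), while $c=-\tfrac{k-1}{2}$ is a negative half-integer, so no denominator $(c)_n$ vanishes. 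Writing out this polynomial shows that $e_2^{k/2}F$ is a homogeneous element of $\mathbb{C}[e_2,D]$ of weight $k$, hence a modular form of weight $k$ on $\Gamma_0(2)$; its normalization $1+O(q)$ (using $x=64q+\cdots$) identifies it with the normalized holomorphic solution furnished by Theorem~\ref{ortho}, as one checks directly in the first case $k=4$, where both equal $(2e_2^2+\mathcal{E}_4)/3$. I expect the one genuinely tedious step to be the expansion of $x h'-\tfrac{k+2}{4}h$ and the verification that the $g$-coefficient there reduces precisely to $\tfrac{k(k+2)}{16}$, so that the inhomogeneous terms cancel and the pure hypergeometric form emerges; everything else is bookkeeping.
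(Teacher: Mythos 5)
Your proposal is correct, and it takes a genuinely different route from the paper. The paper works entirely inside the ring $\mathbb{C}[e_2,D]$: it writes the candidate solution as the explicit terminating sum $f=\sum_i a_i\,64^iD^ie_2^{k/2-2i}$ with $a_i$ the Pochhammer coefficients, computes $\vartheta^2(D^ie_2^{k/2-2i})$ as a three-term combination of monomials using $\vartheta(e_2)=32D-e_2^2/2$ and $\vartheta(D)=0$, and then verifies by a direct (and rather opaque) recurrence calculation that every coefficient $a_i'$ of $\vartheta^2(f)-\tfrac{k(k+2)}{16}\mathcal{E}_4f$ vanishes. You instead change variables to the invariant coordinate $x=64/j_2$, establish $\Theta x=e_2\,x(1-x)$ and $\mathcal{E}_4=e_2^2(1-x)$, and push the ansatz $f=e_2^{k/2}g(x)$ through \eqref{alt-mydiff} to land on the Gauss equation with $c=-\tfrac{k-1}{2}$, $a+b=\tfrac{1-k}{2}$, $ab=\tfrac{k(k-2)}{16}$; I have checked that your reduced equation $x(1-x)g''+[\tfrac{1-k}{2}-\tfrac{3-k}{2}x]g'+\tfrac{k(2-k)}{16}g=0$ is exactly what comes out, and the parameter matching, the non-vanishing of $(c)_n$, the termination of the series for even $k$, and the identification with a weight-$k$ element of $\mathbb{C}[e_2,D]$ are all handled correctly. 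Your route is the more conceptual one (in the spirit of Kaneko--Zagier): it explains \emph{why} the hypergeometric parameters are what they are rather than verifying them a posteriori, at the cost of having to justify the change of variables (division by the nonvanishing factor $e_2^{(k+4)/2}(1-x)$ and by $x$, which is harmless since these are not identically zero). The paper's computation, by contrast, never leaves the polynomial ring and so makes modularity and holomorphy immediate, but hides the hypergeometric structure inside an unmotivated coefficient identity. The two approaches involve comparable amounts of bookkeeping, concentrated respectively in the expansion of $xh'-\tfrac{k+2}{4}h$ and in the verification that $a_i'=0$.
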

\begin{proof}
It is sufficient to show that
$$f:=\sum_{\substack{0 \leq i\leq
k/4}}\frac{(-\frac{k}{4})_i(-\frac{k-2}{4})_i}{(-\frac{k-1}{2})_i
i!}64^i D^ie_2^{\frac{k}{2}-2i}$$ is a solution of
\eqref{alt-mydiff}, since $f$ is a normalized modular form of
weight $k$ on $\Gamma_0(2)$. Since $$\mathcal{E}_4=e_2^2-64D,$$ by
\eqref{theta}, we find that
$$\vartheta(e_2)=-\frac{\mathcal{E}_4}{2}=32D-\frac{e_2^2}{2}.$$
Using these, we obtain
$$\vartheta^2(D^ie_2^{\frac{k}{2}-2i})=\alpha D^ie_2^{\frac{k}{2}-2i+2}+\beta D^{i+1}e_2^{\frac{k}{2}-2i}
+\gamma D^{i+2}e_2^{\frac{k}{2}-2i-2}$$ with
\begin{equation}\label{abc}
\alpha=\frac{(k-4i)(k-4i+2)}{16},\quad \beta=-8(k-4i)^2, \quad
\gamma=256(k-4i)(k-4i-2).
\end{equation} Hence, for
$$f=\sum_{\substack{0 \leq i \leq k/4}}a_iD^ie_2^{\frac{k}{2}-2i} \quad \text{with}
\quad
a_i=64^i\frac{(-\frac{k}{4})_i(-\frac{k-2}{4})_i}{(-\frac{k-1}{2})_i
i!},$$ we have
$$\vartheta^2(f)-\frac{k(k+2)}{16}\mathcal{E}_4f=\sum_{\substack{0
\leq i \leq k/4}}a'_iD^{i+1}e_2^{\frac{k}{2}-2i},$$ for some
constants $a'_i$. We can complete the proof by showing that
$a'_i=0$. By the definition of $a_i$ and \eqref{abc}, we can
express $a'_i$ in terms of $a_i$, namely,
{\allowdisplaybreaks\begin{align*}
a'_i=&\frac{(k-4i-4)(k-4i-2)}{16}a_{i+1}-8(k-4i)^2a_i\\
     &+256(k-4i+4)(k-4i+2)a_{i-1}-\frac{k(k+2)}{16}(a_{i+1}-64a_i)\\
    =&a_i \times
   \Bigg\{ -\frac{(k-4i-4)(k-4i-2)(k-4i)(k-4i-2)}{2(k-2i-1)(i+1)}\\
   &\hspace{0.45 in} -8(k-4i)^2-32(k-2i+1)i\\
  &\hspace{0.45in} -\frac{k(k+1)}{16} \left(-8\frac{(k-4i)(k-4i-2)}{(k-2i-1)(i+1)}-64 \right)
  \Bigg\}\\=&0,
\end{align*}}
after a simple algebraic calculation.
\end{proof}

\noindent \textbf{Remark.} Solutions of \eqref{mydiff} can be reformulated in terms of Rankin--Cohen brackets \cite{Zagier}. For modular forms $f$ and $g$ of weights $k$ and $l$, define a modular form $[f, g]$ of weight $k+l+2$ by
$$[f, g]=kfg'-lf'g$$
(``Rankin--Cohen brackets of degree 1''). By the definition of the $\vartheta$--operator in \eqref{theta}, the above equation may also be written as
\begin{equation}\label{bracket}
[f, g]=kf\vartheta_l(g)-l\vartheta_k(f)g.
\end{equation}
\begin{lemma}
Suppose $F_k$ satisfies the differential equation
\eqref{alt-mydiff}. Then 
\begin{equation}\vartheta ([F_k, e_2])=\frac{k-2}{8}[F_k,
\mathcal{E}_4]\label{bracket-ex1}\end{equation} and
\begin{equation}\vartheta( (k-2)[F_k, \mathcal{E}_4])+4\vartheta([F_k,
e_2^2])=\frac{(k-4)(k+2)}{2}[F_k,
e_2].\label{bracket-ex2}\end{equation}
\end{lemma}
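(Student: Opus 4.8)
The plan is to prove both identities by direct expansion, reducing each side to a linear combination of a few explicit monomials in $e_2$, $\mE_4$, $F_k$, and $\vartheta(F_k)$, and then matching coefficients. The only inputs I need are the definition \eqref{bracket} of the bracket, the Leibniz rule for $\vartheta$, the evaluations \eqref{theta-example1}--\eqref{theta-example3}, the elementary consequence $\vartheta(e_2^2)=2e_2\vartheta(e_2)=-e_2\mE_4$, and---crucially---the hypothesis \eqref{alt-mydiff} in the form $\vartheta^2(F_k)=\tfrac{k(k+2)}{16}\mE_4 F_k$, which lets me eliminate every occurrence of $\vartheta^2(F_k)$ as soon as it appears.

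First I would dispose of \eqref{bracket-ex1}. Expanding via \eqref{bracket} gives $[F_k,e_2]=-\tfrac{k}{2}\mE_4 F_k-2e_2\vartheta(F_k)$. Applying $\vartheta$ by the Leibniz rule and substituting $\vartheta(e_2)=-\mE_4/2$, $\vartheta(\mE_4)=-e_2\mE_4$, and $\vartheta^2(F_k)=\tfrac{k(k+2)}{16}\mE_4 F_k$ collapses $\vartheta([F_k,e_2])$ into a combination of the two monomials $e_2\mE_4 F_k$ and $\mE_4\vartheta(F_k)$, with coefficients $-\tfrac{k(k-2)}{8}$ and $-\tfrac{k-2}{2}$. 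Independently expanding $[F_k,\mE_4]=-ke_2\mE_4 F_k-4\mE_4\vartheta(F_k)$ shows that $\tfrac{k-2}{8}[F_k,\mE_4]$ is a combination of the same two monomials with the same two coefficients, which gives \eqref{bracket-ex1} directly.

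The second identity \eqref{bracket-ex2} is the same computation carried one degree higher. Here I would expand $[F_k,\mE_4]$ and $[F_k,e_2^2]$, apply $\vartheta$ to each, and observe that after using the evaluations above together with $\vartheta^2(F_k)=\tfrac{k(k+2)}{16}\mE_4 F_k$, every term lands in the span of the three monomials $\mE_4^2 F_k$, $e_2^2\mE_4 F_k$, and $e_2\mE_4\vartheta(F_k)$. Forming $(k-2)\vartheta([F_k,\mE_4])+4\vartheta([F_k,e_2^2])$, the key simplification is that the $e_2^2\mE_4 F_k$ contributions, appearing with coefficients $k(k-2)$ and $-k(k-2)$, cancel exactly; the surviving $\mE_4^2 F_k$ coefficient factors as $-\tfrac{k(k-4)(k+2)}{4}$ and the $e_2\mE_4\vartheta(F_k)$ coefficient as $-(k-4)(k+2)$. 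Comparing with the expansion of the right-hand side, these are precisely $\tfrac{(k-4)(k+2)}{2}$ times the two monomials of $[F_k,e_2]$, up to the common factor $\mE_4$ that both surviving terms carry (a factor forced by the weight count $\mathrm{wt}=k+8$ on the left), which completes the identification.

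The argument is entirely mechanical once $\vartheta^2(F_k)=\tfrac{k(k+2)}{16}\mE_4 F_k$ is substituted, so the main obstacle is purely bookkeeping: holding all the products in a fixed monomial basis and tracking the polynomial-in-$k$ coefficients through two applications of $\vartheta$. The one genuinely structural point worth isolating is the cancellation of the $e_2^2\mE_4 F_k$ terms in \eqref{bracket-ex2}, since it is exactly this cancellation that forces the left-hand side to collapse onto a scalar multiple of $[F_k,e_2]$ rather than onto a generic weight-$(k+8)$ form.
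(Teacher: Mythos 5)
Your proposal is correct and follows essentially the same route as the paper: expand the brackets via \eqref{bracket}, apply $\vartheta$ with the Leibniz rule and the evaluations \eqref{theta-example1}--\eqref{theta-example2}, and eliminate $\vartheta^2(F_k)$ using \eqref{alt-mydiff}; the paper writes this out only for \eqref{bracket-ex1} (with exactly your coefficients $-\tfrac{k(k-2)}{8}$ and $-\tfrac{k-2}{2}$) and dismisses \eqref{bracket-ex2} with ``similarly.'' One substantive point in your favor: your computation of the second identity is right, and it shows that the left-hand side actually equals $\tfrac{(k-4)(k+2)}{2}\,\mE_4\,[F_k,e_2]$, so the right-hand side of \eqref{bracket-ex2} as printed is missing a factor of $\mE_4$ --- something already forced by the weight count ($k+8$ on the left versus $k+4$ for $[F_k,e_2]$). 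You caught this and proved the corrected statement, which is more than the paper's one-line remark supplies; it would be worth stating the corrected identity explicitly rather than describing the missing $\mE_4$ as a factor the terms ``carry.''
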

\begin{proof}
Since $F_k$ is a solution of \eqref{alt-mydiff}, $F_k$ satisfies
the equality
$$\vartheta^2(F_k)=\frac{k(k+2)}{16}\mathcal{E}_4F_k.$$  From this and
the use of \eqref{theta-example1} and \eqref{theta-example2}, we
obtain {\allowdisplaybreaks\begin{align*}
\vartheta ([F_k, e_2])=&\vartheta \left(-\frac{k}{2}F_k\mathcal{E}_4-2e_2\vartheta(F_k) \right)\\
=&-\frac{k}{2}\vartheta(F_k)\mathcal{E}_4+\frac{k}{2}e_2\mathcal{E}_4F_k-2\left( \frac{k(k+2)}{16}
\mathcal{E}_4F_k \right)e_2+\mathcal{E}_4\vartheta (F_k)\\
=&-\frac{k(k-2)}{8}e_2\mathcal{E}_4F_k-\frac{k-2}{2}\mathcal{E}_4
\vartheta(F_k)\\
=&\frac{k-2}{8}[F_k, \mathcal{E}_4],
\end{align*}} which proves \eqref{bracket-ex1}.
Similarly, by using \eqref{bracket}, \eqref{theta-example1} and
\eqref{theta-example2}, we can prove \eqref{bracket-ex2}.
\end{proof}

\section{A class of infinite series connected with triangular numbers}

On page $188$ of his lost notebook \cite{Ramanujan-lost},
Ramanujan examines the series,
\begin{equation}
T_{2k}(q):=1+\sum_{n=1}^{\infty}(-1)^n\{(6n-1)^{2k}q^{n(3n-1)/2}+(6n+1)^{2k}q^{n(3n+1)/2}
\}, \quad |q|<1.
\end{equation}
Note that the exponents $n(3n \pm 1)/2$ are the generalized
pentagonal numbers. The series $T_{2k}(q)$, $k=1, 2, \dots$, can
be represented in terms of the Eisenstein series $P(q)$, $Q(q)$,
and $R(q)$. The proofs of all the formulas on page $188$ are given
in~\cite{BY}.

If we define the series,
\begin{equation}\label{Triangular}
\mathcal{T}_{2k}:=\mathcal{T}_{2k}(q)=1+\sum_{n=1}^{\infty}(2n+1)^{2k}q^{n(n+1)/2},\quad
|q|<1,
\end{equation}
then we can obtain analogous formulas for $\mathcal{T}_{2k}$ in
terms of $e$, $\mP$, and $\mQ$. Observe that the exponents
$n(n+1)/2$ are the triangular numbers $T_n$ defined by 
$$T_n:=\frac{n(n+1)}{2}, \quad n\geq0.$$

Recall that Ramanujan's theta function $\psi(q)$ \cite[p.~36,
Entry 22 (ii)]{BCB3} is defined by
\begin{equation}\label{psi}
\psi(q)=\sum_{n=0}^{\infty}q^{n(n+1)/2}=\frac{(q^2;q^2)_{\infty}}{(q;q^2)_{\infty}},
\end{equation}
where $|q|<1$, and, for any complex number $a$, we write $(a;q)_{\infty}:=\prod_{n=1}^{\infty}(1-aq^{n-1}).$

We now state four formulas for $\mathcal{T}_{2k}$.
\begin{theorem}\label{T2k}
If $\mathcal{T}_{2k}$ is defined by \eqref{Triangular}, and $\mP$,
$e$, and $\mQ$ are defined by \eqref{mp}--\eqref{mq}, then\\
$\textup{(i)} \hspace{0.55in}\displaystyle \frac{\mathcal{T}_2(q)}{\psi(q)}=\mP,$\\
$\textup{(ii)}\hspace{0.5in}\displaystyle \frac{\mathcal{T}_4(q)}{\psi(q)}=3\mP^2-2\mQ,$\\
$\textup{(iii)}\hspace{0.5in}\displaystyle \frac{\mathcal{T}_6(q)}{\psi(q)}=15\mP^3-30\mP\mQ+16e\mQ,$\\
$\textup{(iv)}\hspace{0.5in}\displaystyle
\frac{\mathcal{T}_8(q)}{\psi(q)}=105\mP^4-420\mP^2\mQ+448e\mP\mQ-128e^2\mQ-4\mQ^2.$
\end{theorem}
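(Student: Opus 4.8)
The plan is to prove all four formulas of Theorem~\ref{T2k} simultaneously by relating the triangular-number series $\mathcal{T}_{2k}$ to derivatives of Ramanujan's theta function $\psi(q)$. The starting observation is that if I apply the operator $\Theta=q\,d/dq$ to $\psi(q)=\sum_{n=0}^{\infty}q^{T_n}$, I obtain $\Theta\psi=\sum_{n=0}^{\infty}\frac{n(n+1)}{2}q^{T_n}$. More generally, the exponent $T_n=n(n+1)/2$ means that $\Theta$ brings down a factor of $T_n=\frac{1}{8}((2n+1)^2-1)$. Thus $(2n+1)^2=8T_n+1=8\Theta+1$ acting on each summand, and so $\mathcal{T}_{2k}$ should be expressible as $(8\Theta+1)^k$ applied to $\psi$, at least formally. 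Concretely, I expect
\begin{equation}\label{theta-psi-op}
\mathcal{T}_{2k}(q)=(8\Theta+1)^k\psi(q),
\end{equation}
which I would verify by matching coefficients of $q^{T_n}$ on both sides, since $(8\Theta+1)$ applied to $q^{T_n}$ yields exactly $(2n+1)^2 q^{T_n}$.

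Given \eqref{theta-psi-op}, the problem reduces to computing the logarithmic derivative of $\psi$ and iterating. The key identity I would establish first is
\begin{equation}\label{log-psi}
\frac{8\,\Theta\psi}{\psi}+1=\mP,
\end{equation}
i.e.\ that $\Theta\log\psi=\frac{\mP-1}{8}$; this is precisely formula (i), since $\mathcal{T}_2/\psi=(8\Theta+1)\psi/\psi=\mP$. To prove \eqref{log-psi} I would use the product representation $\psi(q)=(q^2;q^2)_\infty/(q;q^2)_\infty$ from \eqref{psi}, take the logarithmic derivative term by term, and expand each $\Theta\log(1-q^m)$ as a Lambert series; collecting the contributions should reproduce $1+8\sum_{n\ge1}\frac{(-1)^{n-1}nq^n}{1-q^n}=\mP$ from \eqref{mp}. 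This is the main computational anchor, and once it is in place the higher formulas follow by repeated application of $(8\Theta+1)$ together with the differential equations \eqref{dmp}--\eqref{dmq}.

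For the inductive step I would write $\mathcal{T}_{2(k+1)}/\psi=(8\Theta+1)\bigl(\mathcal{T}_{2k}/\psi\cdot\psi\bigr)/\psi$ and use the Leibniz rule for $\Theta$. Setting $R_k:=\mathcal{T}_{2k}/\psi$, a short calculation gives the recursion
\begin{equation}\label{Rrec}
R_{k+1}=8\Theta R_k+8R_k\,\frac{\Theta\psi}{\psi}+R_k
        =8\Theta R_k+R_k\,\mP,
\end{equation}
where the last equality uses \eqref{log-psi}. Thus $R_{k+1}=\mP R_k+8\,\Theta R_k$. Starting from $R_1=\mP$, I would now grind through \eqref{Rrec} using \eqref{dmp}, \eqref{de}, and \eqref{dmq} to differentiate the polynomials in $\mP$, $e$, $\mQ$: for instance $R_2=\mP\cdot\mP+8\Theta\mP=\mP^2+8\cdot\frac{\mP^2-\mQ}{4}=3\mP^2-2\mQ$, matching (ii). Iterating once more, $R_3=\mP R_2+8\Theta R_2$, where $\Theta R_2$ is computed via \eqref{dmp} and \eqref{dmq}, should yield (iii), and a further application gives (iv).

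The main obstacle I anticipate is twofold. First, establishing \eqref{log-psi} cleanly requires careful bookkeeping of the Lambert-series expansion of $\Theta\log\psi$, splitting even and odd indices so that the alternating-sign series defining $\mP$ in \eqref{mp} emerges correctly; the sign pattern $(-1)^{n-1}$ is exactly what the $(q;q^2)_\infty$ denominator produces, but matching it term by term is delicate. Second, the iteration \eqref{Rrec} for (iii) and (iv) is purely algebraic but lengthy: each step mixes $\Theta\mP$, $\Theta e$, and $\Theta\mQ$ through \eqref{dmp}--\eqref{dmq}, and the coefficients $15,30,16$ and $105,420,448,128,4$ must come out exactly, so the risk is arithmetic slips rather than any conceptual difficulty. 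Once \eqref{theta-psi-op}, \eqref{log-psi}, and the recursion \eqref{Rrec} are secured, all four formulas are forced.
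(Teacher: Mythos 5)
Your proposal is correct and follows essentially the same route as the paper: the identity $(2n+1)^2=8T_n+1$, the logarithmic derivative of $\psi$ yielding $\mathcal{T}_2/\psi=\mP$, and the recursion $R_{k+1}=\mP R_k+8\Theta R_k$ (the paper's $g_{2k+2}=8q\frac{d}{dq}g_{2k}+\mP g_{2k}$) combined with the differential equations \eqref{dmp}--\eqref{dmq} are exactly the paper's argument. The only cosmetic difference is that you package the iteration as the operator $(8\Theta+1)^k$ acting on $\psi$, whereas the paper states the equivalent recurrences $8q\frac{d}{dq}\mathcal{T}_{2k}=\mathcal{T}_{2k+2}-\mathcal{T}_{2k}$ and $8q\frac{d}{dq}\psi=\mathcal{T}_2-\psi$ separately.
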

\begin{proof}
Important in our proofs is the simple identity
\begin{equation}\label{basic2}
(2n+1)^2=8\frac{n(n+1)}{2}+1.
\end{equation}
Observe that, by \eqref{psi}, {\allowdisplaybreaks\begin{align*}
\mP=&1+8q\frac{d}{dq}\sum_{n=1}^{\infty}(-1)^n \log (1-q^n)\\
   =&1+8q\frac{d}{dq}\log \frac{(q^2;q^2)_{\infty}}{(q;q^2)_{\infty}}\\
   =&1+8q\frac{\frac{d}{dq}\psi(q)}{\psi(q)}.
\end{align*}}
Thus, using \eqref{basic2}, we find that
{\allowdisplaybreaks\begin{align}
\psi(q)\mP=&\psi(q)+8q\frac{d}{dq}
           \left( 1+\sum_{n=1}^{\infty}q^{n(n+1)/2} \right) \nonumber \\
   =&\psi(q)+8\sum_{n=1}^{\infty} \frac{n(n+1)}{2} q^{n(n+1)/2} \nonumber\\
   =&\psi(q)+\sum_{n=1}^{\infty}(2n+1)^2q^{n(n+1)/2}-\psi(q)+1 \nonumber\\
   =&\mathcal{T}_2(q).  \label{T2*}
\end{align}}
This completes the proof of (i).

In the proofs of the remaining identities of Theorem~\ref{T2k}, in
each case, we apply the operator
 $8q\frac{d}{dq}$ to the preceding identity. In each proof we also use the identities
\begin{equation}\label{recurT2k}
8q\frac{d}{dq}\mathcal{T}_{2k}(q)=\mathcal{T}_{2k+2}(q)-\mathcal{T}_{2k}(q),
\end{equation}
which follows from differentiation and the use of \eqref{basic2},
and
\begin{equation}\label{T2&psi}
8q\frac{d}{dq} \psi(q)=\mathcal{T}_2(q)-\psi(q),
\end{equation}
which arose in the proof of \eqref{T2*}.

We now prove (ii). Applying the operator $8q\frac{d}{dq}$ to
\eqref{T2*} and using \eqref{recurT2k} and \eqref{T2&psi}, we
deduce
that$$\mP(q)(\mathcal{T}_2-\psi(q))+\psi(q)8q\frac{d\mP(q)}{dq}=\mathcal{T}_4(q)-\mathcal{T}_2(q).$$
Employing (i) to simplify and using \eqref{dmp}, we arrive at
\begin{equation}\label{T4*}
\mathcal{T}_4(q)=(3\mP^2-2\mQ)\psi(q),
\end{equation}
as desired.

To prove (iii), we apply the operator $8q\frac{d}{dq}$ to
\eqref{T4*} and use \eqref{recurT2k} and \eqref{T2&psi} to deduce
that {\allowdisplaybreaks\begin{align*}
\mathcal{T}_6-\mathcal{T}_4=&8\Big(6\mP q\frac{d\mP}{dq}-2q\frac{d\mQ}{dq}\Big)\psi(q)+(3\mP^2-2\mQ)(\mathcal{T}_2-\psi(q))\\
=&\Big(12\mP(\mP^2-\mQ)-16(\mP\mQ-e\mQ)\Big)\psi(q)+(3\mP^2-2\mQ)(\mP-1)\psi(q),
\end{align*}}
where we used \eqref{dmp}, \eqref{dmq} and (i). If we now employ
\eqref{T4*} and simplify, we obtain (iii).

In general, by applying the operator $8q\frac{d}{dq}$ to
$\mathcal{T}_{2k}$ and using \eqref{recurT2k} and \eqref{T2&psi},
we find that
$$\mathcal{T}_{2k+2}-\mathcal{T}_{2k}=8q\frac{d}{dq}g_{2k}(\mP,e,\mQ)\psi(q)+\mP g_{2k}(\mP,e,\mQ),$$
where we define the polynomials $g_{2k}(\mP,e,\mQ),~k \geq 1$, by
\begin{equation}\label{g2k}
g_{2k}(\mP,e,\mQ):=\frac{\mathcal{T}_{2k}(q)}{\psi(q)}.
\end{equation}
Then proceeding by induction while using the formula \eqref{g2k}
for $\mathcal{T}_{2k}$, we find that
\begin{equation}\label{recg2k}
g_{2k+2}(\mP,e,\mQ)=8q\frac{d}{dq}g_{2k}(\mP,e,\mQ)+\mP
g_{2k}(\mP,e,\mQ).
\end{equation}
With the use of \eqref{recg2k} and the differential equations
\eqref{dmp}--\eqref{dmq}, it should now be clear how to prove the
remaining identity (iv), and so we omit further details.
\end{proof}

\noindent \textbf{Remark.} Observe from Theorem \ref{T2k} that a general formula for $g_{2k}(\mP,e,\mQ)$ contains all products $\mP^l e^m \mQ^n$, such that $2l+2m+4n=2k$. It seems to be extremely difficult to find a general formula for
$g_{2k}(\mP,e,\mQ)$ that would give explicit representations for
each coefficient of $\mP^l e^m \mQ^n$.

\section{A combinatorial identity}

Let $\wt_s$ be defined for $s, n \in \mathbb{N}$, by
\begin{equation}
\wt_s(n)=\sum_{d|n}(-1)^{d-1}d^s, \label{til}
\end{equation}
where $\wt_1(n)=\wt(n),$ and $\wt_s(n)=0$ if $n \notin \mathbb{N}$. Glaisher \cite{Glaisher-sum} defined seven quantities which depend on the divisors of $n$, including \eqref{til}, and found expressions for them in terms of the $\sigma_s(n)$ defined as \eqref{sigma}. For instance~\cite{Glaisher-sum},
\begin{equation}
\wt_s(n)=\sigma_s(n)-2^{s+1}\sigma_s(n/2). \label{wt-s}
\end{equation} Then the first formula (i) in Theorem \ref{T2k} has an interesting arithmetical interpretation.

\begin{theorem}\label{combinatorial}
Define $\wt(0)=\frac{1}{8}$. Then we have that
\begin{equation}\label{tsig_j}
8\sum_{\substack{ j+k(k+1)/2=n \\ j, k \geq 0}} \wt(j)=\left\{
\begin{array}{ll}
                   (2r+1)^2, &\text{if } n=r(r+1)/2,\\
                          0, &\text{otherwise}.
                    \end{array} \right.
\end{equation}
\end{theorem}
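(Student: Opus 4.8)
The plan is to read formula (i) of Theorem~\ref{T2k} as the generating-function identity $\mathcal{T}_2(q)=\mP\,\psi(q)$ and then to compare coefficients of $q^n$ on the two sides; the entire content of \eqref{tsig_j} is encoded in the Cauchy product of these two series.

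First I would expand $\mP$ as a power series in $q$. Starting from the definition \eqref{mp} and substituting $\frac{q^n}{1-q^n}=\sum_{m\geq 1}q^{nm}$, the coefficient of $q^j$ for $j\geq 1$ equals $\sum_{d\mid j}(-1)^{d-1}d=\wt(j)$ by \eqref{til} with $s=1$. Hence $\mP=1+8\sum_{j\geq 1}\wt(j)q^j$. Since the stipulation $\wt(0)=\tfrac18$ gives $8\wt(0)=1$, this may be written uniformly as $\mP=8\sum_{j\geq 0}\wt(j)q^j$, which is exactly the role the convention plays.

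Next, by \eqref{psi} the theta function is $\psi(q)=\sum_{k\geq 0}q^{k(k+1)/2}$, a series supported on the triangular numbers. Multiplying, the coefficient of $q^n$ in $\mP\,\psi(q)$ is the Cauchy product
\[
8\sum_{\substack{j+k(k+1)/2=n\\ j,k\geq 0}}\wt(j),
\]
which is the left-hand side of \eqref{tsig_j}. On the other side, setting $k=1$ in \eqref{Triangular} gives $\mathcal{T}_2(q)=1+\sum_{r\geq 1}(2r+1)^2q^{r(r+1)/2}$, whose coefficient of $q^n$ is $(2r+1)^2$ when $n=r(r+1)/2$ and $0$ otherwise, the constant term $1=(2\cdot 0+1)^2$ corresponding to $r=0$. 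Equating these two coefficient sequences via Theorem~\ref{T2k}(i) yields \eqref{tsig_j}.

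There is no serious analytic obstacle: the one point needing care is the bookkeeping at $n=0$, namely the verification that $\wt(0)=\tfrac18$ is precisely what is needed to absorb the constant term $1$ of $\mP$ into the single formula $\mP=8\sum_{j\geq 0}\wt(j)q^j$. Once that is in place, the statement is a direct comparison of coefficients, all series manipulations being legitimate for $|q|<1$.
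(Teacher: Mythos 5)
Your proof is correct and follows exactly the same route as the paper: expand $\mP$ into $8\sum_{j\geq 0}\wt(j)q^j$ using the convention $\wt(0)=\tfrac18$, rewrite Theorem~\ref{T2k}(i) as the product identity \eqref{coromain}, and equate coefficients of $q^n$. No gaps.
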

\begin{proof}
By expanding the summands of $\mP$ in \eqref{mp} in geometric
series and collecting the coefficients of $q^n$ for each positive
integer $n$, we find that
$$\mP(q)=1+8\sum_{n=1}^{\infty}\wt(n)q^n=8\sum_{n=0}^{\infty} \wt(n)q^n,$$ upon using the definition $\wt(0)=\frac{1}{8}.$ Thus, by \eqref{psi} and
Theorem~\ref{T2k}, (i) can be written in the form
\begin{equation}\label{coromain}
\left( 8\sum_{j=0}^{\infty} \wt(j)q^j \right) \cdot \left(\sum_{k=0}^{\infty}q^{k(k+1)/2}\right)=1+\sum_{n=1}^{\infty}(2n+1)^2q^{n(n+1)/2}.
\end{equation}
Equating coefficients of $q^n$, $n \geq 1$, on both sides of
\eqref{coromain}, we complete the proof.
\end{proof}

Let $\mathbb{N}$ be the set of positive integers. Define
\begin{equation}\label{A}
\mathcal{A}:=\{(x,y)\in\mathbb{N}^2 : 2x^2+y^2=8n+1, y\text{ is odd}, 2|x \}
\end{equation}
and
\begin{equation}\label{B}
\mathcal{B}:=\{(x,y)\in\mathbb{N}^2 : x^2+y^2=8n+1, y\text{ is odd}, 4|x \}.
\end{equation}Then we derive the following combinatorial corollary of Theorem \ref{combinatorial}.

\begin{corollary}
The number of elements of $\mathcal{A}$ and the number of elements of $\mathcal{B}$ have the same parity in all cases except when $n=r(r+1)/2$ and $r \equiv 1, 2 \pmod 4$.
\end{corollary}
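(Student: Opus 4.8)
The plan is to first convert the two counting problems into representation counts by triangular numbers, exploiting the identity $8T_v+1=(2v+1)^2$ already used in Section 4. For a pair in $\mathcal{A}$ defined by \eqref{A}, write $x=2u$ (with $u\ge1$, since $2\mid x$ and $x\ge1$) and $y=2v+1$ (with $v\ge0$); then $2x^2+y^2=8u^2+8T_v+1$, so the defining equation $2x^2+y^2=8n+1$ becomes $u^2+T_v=n$, and hence $|\mathcal{A}|=\#\{(u,v):u\ge1,\ v\ge0,\ u^2+T_v=n\}$. Likewise, writing $x=4w$ (with $w\ge1$) and $y=2v+1$ in \eqref{B} turns $x^2+y^2=8n+1$ into $2w^2+T_v=n$, whence $|\mathcal{B}|=\#\{(w,v):w\ge1,\ v\ge0,\ 2w^2+T_v=n\}$. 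In other words, $|\mathcal{A}|$ counts the pairs $(j,k)$ with $j+T_k=n$, $j\ge1$, $k\ge0$, for which $j$ is a perfect square, and $|\mathcal{B}|$ counts those for which $j$ is twice a perfect square; both correspondences are bijections because each positive square and each positive twice-square has a unique positive root.

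Next I would reduce the weighted divisor function $\wt$ modulo $2$. Since $(-1)^{d-1}d\equiv d\pmod2$ for every $d$, the definition \eqref{til} gives $\wt(j)\equiv\sigma(j)\pmod2$; and by the classical fact that $\sigma(j)$ is odd precisely when $j$ is a perfect square or twice a perfect square, $\wt(j)$ is odd in exactly those two (disjoint) cases. Consequently
\[
\sum_{\substack{j+T_k=n\\ j\ge1,\,k\ge0}}\wt(j)\equiv |\mathcal{A}|+|\mathcal{B}|\pmod2,
\]
because the squares among the admissible $j$ produce exactly the pairs counted by $|\mathcal{A}|$ and the twice-squares produce exactly those counted by $|\mathcal{B}|$, with no overlap (the equation $u^2=2w^2$ has no positive integer solution).

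It then remains to evaluate the left-hand sum modulo $2$ via Theorem \ref{combinatorial}. Separating the term $j=0$, whose contribution is $8\wt(0)=1$ and which occurs exactly when $n$ is triangular, equation \eqref{tsig_j} yields $8\sum_{j\ge1}\wt(j)=(2r+1)^2-1=8T_r$ when $n=T_r$ and $0$ otherwise, so that $\sum_{j\ge1}\wt(j)=n$ or $0$ accordingly. Reducing mod $2$ gives $|\mathcal{A}|+|\mathcal{B}|\equiv T_r\pmod2$ when $n=T_r$ and $|\mathcal{A}|+|\mathcal{B}|\equiv0\pmod2$ otherwise. Since $\mathcal{A}$ and $\mathcal{B}$ have the same parity iff this sum is even, and a short check of $r\bmod4$ shows $T_r=r(r+1)/2$ is odd exactly when $r\equiv1,2\pmod4$, the asserted exception follows. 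I expect the only delicate points to be the parity reduction of $\wt$ combined with the $\sigma$-characterization, and the careful bookkeeping of the $j=0$ term with $\wt(0)=\frac{1}{8}$; the remaining algebra is routine.
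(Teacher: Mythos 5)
Your argument is correct and follows essentially the same route as the paper: both reduce $\#\mathcal{A}+\#\mathcal{B}$ modulo $2$ to $\sum_{j\ge 1}\wt(j)$ taken over $j+k(k+1)/2=n$, via the substitutions $x=2u$ (resp.\ $x=4w$), $y=2v+1$ and the fact that $\wt(j)$ is odd exactly when $j$ is a square or twice a square, and then evaluate that sum using Theorem \ref{combinatorial} and the parity of $r(r+1)/2$. Your derivation of the parity of $\wt$ through $\sigma$ and your explicit handling of the $j=0$ term merely spell out details the paper leaves implicit.
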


\begin{proof}
 Since $$\wt(j)\equiv\left\{
\begin{array}{ll}
                    1 \pmod 2, &\textrm{if $j=m^2$ or $j=2m^2$},\\
                    0 \pmod 2, &\textrm{otherwise},
                    \end{array} \right. $$ we have that
\begin{equation}\label{AB}
\sum_{\substack{ j+k(k+1)/2=n \\ j, k \geq 0}} \wt(j)\equiv
\sum_{\substack{ j+k(k+1)/2=n \\ j \geq 1, k \geq 0\\j=m^2
\text{ or } j=2m^2}} \wt(j) \pmod 2.
\end{equation}
After changing variables, it is easy to see that $\mathcal{A}$ and $\mathcal{B}$ can be rewritten as 
$$\mathcal{A}=\{ (j, k)\,|\,j >0, k \geq 0, j+k(k+1)/2=n,
j=m^2~\}$$ and $$\mathcal{B}=\{\,(j, k)\,|\,j >0, k \geq 0,
j+k(k+1)/2=n, j=2m^2 \}.$$ Therefore, by \eqref{AB}, we find that
{\allowdisplaybreaks\begin{align*}
\# \mathcal{A} + \# \mathcal{B} &=\sum_{\substack{ j+k(k+1)/2=n \\
j\geq 1, k \geq 0\\j=m^2 \text{or} j=2m^2}} 1  \\
 &\equiv \sum_{\substack{ j+k(k+1)/2=n \\ j \geq 1, k \geq 0\\j=m^2
\text{or} j=2m^2}} \wt(j) \pmod 2 \\
&\equiv \sum_{\substack{ j+k(k+1)/2=n \\ j \geq 1, k \geq 0}}
\wt(j) \pmod 2 \\
&=\left\{ \begin{array}{ll}
                   \frac{(2r+1)^2-1}{8}, &\textrm{if $n=\frac{r(r+1)}{2}$},\\
                          0, &\textrm{otherwise}
                    \end{array} \right. \\
&\equiv \left\{ \begin{array}{ll}
                   1 \pmod 2, &\textrm{when $n=r(r+1)/2$ and $r \equiv 1,~2 \pmod 4$},\\
                   0 \pmod 2, &\textrm{otherwise}.
                    \end{array} \right.
\end{align*}}
So we conclude the result.
\end{proof}

\medskip
\textbf{Acknowledgements.} I am deeply indebted to Professors S. Ahlgren, B. C. Berndt, M. Boylan, and A. Zaharescu for their helpful comments and encouragement.


\end{document}